\documentclass[11pt]{article}

\usepackage{xspace}
\usepackage{array}
\usepackage{subfigure}
\usepackage{subfig}
\usepackage{xcolor}
\usepackage{epsfig}
\usepackage{gensymb}
\usepackage[english]{babel}
\usepackage{graphicx}
\usepackage{amssymb}
\usepackage{amsmath}
\usepackage{amsthm}
\usepackage{multirow}
\usepackage{todonotes}
\usepackage{enumerate}
\usepackage{paralist}
\usepackage{cases}
\usepackage{dsfont}
\usepackage{siunitx}
\usepackage{fullpage}
\usepackage{lineno}

\newcommand{\remove}[1]{}

\renewcommand{\int}{int}
\newcommand{\N}{\ensuremath{\mathds{N}}} %
\newcommand{\R}{\ensuremath{\mathds{R}}} %

\newcommand{\fullqed}{\hbox{}\nobreak\hfill\ensuremath{\blacktriangleleft}}

\newtheorem{lemma}{Lemma}
\newtheorem{theorem}{Theorem}
\newtheorem{obs}{Observation}
\newtheorem{property}{Property}%

\renewcommand{\phi}{\varphi}

\begin{document}
\title{Universal Geometric Graphs\thanks{Partially supported by the MSCA-RISE project ``CONNECT'' No 734922, the MIUR Project ``AHeAD'' under PRIN 20174LF3T8, and the NSF award DMS-1800734.}}
\author{Fabrizio Frati\thanks{Department of Engineering, Roma Tre University, Rome, Italy. Email: \texttt{frati@dia.uniroma3.it}}
\and Michael Hoffmann\thanks{Department of Computer Science, ETH Z\"urich, Z\"urich, Switzerland. Email: \texttt{hoffmann@inf.ethz.ch}}
\and Csaba D. T{\'o}th\thanks{Department of Mathematics, California State University Northridge, Los Angeles, CA,
and Department of Computer Science, Tufts University, Medford, MA, USA. Email: \texttt{csaba.toth@csun.edu}}
}
\date{}
\maketitle

\begin{abstract}
We introduce and study the problem of constructing geometric graphs that have few vertices and edges and that are universal for planar graphs or for some sub-class of planar graphs; a geometric graph is \emph{universal} for a class $\mathcal H$ of planar graphs if it contains an embedding, i.e., a crossing-free drawing, of every graph in $\mathcal H$.

Our main result is that there exists a geometric graph with $n$ vertices and $O(n \log n)$ edges that is universal for $n$-vertex forests; this extends to the geometric setting a well-known graph-theoretic result by Chung and Graham, which states that there exists an $n$-vertex graph with $O(n \log n)$ edges that contains every $n$-vertex forest as a subgraph. Our $O(n \log n)$ bound on the number of edges cannot be improved, even if more than $n$ vertices are allowed.

We also prove that, for every positive integer $h$, every $n$-vertex convex geometric graph that is universal for $n$-vertex outerplanar graphs has a near-quadratic number of edges, namely $\Omega_h(n^{2-1/h})$; this almost matches the trivial $O(n^2)$ upper bound given by the $n$-vertex complete convex geometric graph.

Finally, we prove that there exists an $n$-vertex convex geometric graph with $n$ vertices and $O(n \log n)$ edges that is universal for $n$-vertex caterpillars.
\end{abstract}

\section{Introduction}\label{sec:intro}

A graph $G$ is \emph{universal} for a class $\mathcal{H}$ of graphs if $G$
contains every graph in $\mathcal{H}$ as a subgraph.  The study of universal
graphs was initiated by Rado~\cite{rad-uguf-64} in the 1960s. Obviously, the
complete graph $K_n$ is universal for any family $\mathcal{H}$ of $n$-vertex
graphs. Research focused on finding the minimum size (i.e., number of edges) of
universal graphs for various families of sparse graphs on $n$ vertices. Babai et al.~\cite{bcegs-gcasg-82} proved that if $\mathcal{H}$ is the family of all
graphs with $m$ edges, then the size of a universal graph for $\mathcal{H}$ is in
$\Omega(m^2/\log m)$ and $O(m^2\log \log m \log m)$. Alon et
al.~\cite{ac-ougde-08,AlonCKRRS00} constructed a universal graph of optimal $\Theta(n^{2-2/k})$ size for $n$-vertex graphs with maximum degree
$k$.

Significantly better bounds exist 
for minor-closed families. Babai et al.~\cite{bcegs-gcasg-82} proved that there exists a universal graph with $O(n^{3/2})$ edges for $n$-vertex planar graphs. For bounded-degree planar graphs, Capalbo~\cite{c-sugbpg-02} constructed universal graphs of linear size, 
improving an earlier bound by Bhatt~et~al.~\cite{bclr-ugbtpg-89}, which extends
to other families with bounded bisection width. B{\"o}ttcher et
al.~\cite{BOTTCHER2010,Bottcher2009} proved that every $n$-vertex graph with
minimum degree $\Omega(n)$ is universal for $n$-vertex bounded-degree planar graphs. For $n$-vertex trees, Chung and Graham~\cite{cg-gcast-78,cg-ugst-83} constructed a
universal graph of size $O(n\log n)$, and showed that this bound is
asymptotically optimal apart from constant factors.

Rado~\cite{rad-uguf-64} has introduced universality also for induced subgraphs.
A graph $G$ is \emph{induced universal} for a class $\mathcal{H}$ of
graphs if $G$ contains every graph in $\mathcal{H}$ as an induced subgraph. 
Alon and Nenadov~\cite{an-oiugb-19} described a graph on $O(n^{\Delta/2})$ vertices
that is induced universal for the class of $n$-vertex graphs of maximum degree $\Delta$.
Recently, Dujmovi\'c et al.~\cite{dej-alpgb-20} showed,
improving earlier results by Bonamy et al.~\cite{BonamyGP20},
that for every $n\in \mathbb{N}$, there exists a graph $U_n$ with $n^{1+o(1)}$ vertices that contains every $n$-vertex planar graph as an induced subgraph.

In this paper, we extend the concept of universality to geometric graphs. A \emph{geometric graph} is a graph together with a straight-line drawing in the plane in which the vertices are distinct points and the edges are straight-line segments not containing any vertex in their interiors. We investigate the problem of constructing, for a given class $\mathcal H$ of planar graphs, a geometric graph with few vertices and edges that is \emph{universal for $\mathcal H$}, that is, it contains an \emph{embedding} 
of every graph into $\mathcal{H}$.
For an (abstract) graph $G_1$ and a geometric graph $G_2$, an \emph{embedding of $G_1$ into $G_2$} is an injective graph homomorphism $\phi:V(G_1)\rightarrow V(G_2)$ such that (i) every edge $uv\in E(G_1)$ is mapped to a line segment  $\phi(u)\phi(v)\in E(G_2)$; and (ii) every pair of edges $u_1v_2, u_2v_2\in E(G_1)$ is mapped to a pair of noncrossing line segments $\phi(u_1)\phi(v_1)$ and $\phi(u_2)\phi(v_2)$ in the plane.

Previous research in the geometric setting was limited to finding the smallest
\emph{complete} geometric graph that is universal for the planar graphs on $n$ vertices.  The intersection pattern of the edges of a geometric graph is determined by the location of its vertices; hence universal complete geometric graphs are commonly referred to as \emph{$n$-universal point sets}.
De~Fraysseix et al.~\cite{FraysseixPP90} proved that the $2n\times n$ section
of the integer lattice is an $n$-universal point set. Over the last 30 years, the upper bound on the size of an $n$-universal point set has been improved from $2n^2$ to $n^2/4+O(n)$~\cite{BannisterCDE14}; the
current best lower bound is $(1.293-o(1))n$~\cite{Scheucher20} (based on stacked
triangulations, i.e., planar graphs of treewidth three; see also~\cite{CardinalHK15,Kurowski04}). It is known that
every set of $n$ points in general position is universal for $n$-vertex
outerplanar graphs~\cite{Bose02,gmpp-ept-90}. An $O(n^{3/2}\log n)$ upper bound is known for $n$-vertex stacked
triangulations~\cite{FulekT15}.

\paragraph{Our Results.}
The results on universal point sets yield an upper bound of $O(n^4)$ for the
size of a geometric graph that is universal for $n$-vertex planar graphs and $O(n^2)$ for
$n$-vertex outerplanar graphs, including trees. We improve the upper bound for $n$-vertex
trees to an optimal $O(n\log n)$, and show that the quadratic upper bound for outerplanar graphs
is essentially tight for convex geometric graphs. More precisely, we prove the following results:

\begin{itemize}\itemsep 0pt
	\item For every $n\in \N$, there exists a geometric graph $G$ with $n$ vertices
	and $O(n\log n)$ edges that is universal for forests with $n$ vertices
   (Theorem~\ref{thm:main} in Section~\ref{sec:geotree}).
    The bound of $O(n\log n)$ edges is asymptotically optimal, apart from constant factors, even in the abstract setting,
    for caterpillars, and if the universal graph is allowed to have more than $n$ vertices~\cite[Theorem~1]{cg-gcast-78}.
    The proof of universality is constructive and yields a polynomial-time algorithm that embeds any forest with $n$
	vertices into $G$. 
	\item For every $h\in \N$ and $n\geq 3h^2$, every $n$-vertex convex geometric graph that is universal for the family
    of $n$-vertex cycles with $h$ disjoint chords has $\Omega_h(n^{2-1/h})$ edges (Theorem~\ref{thm:h-chords} in Section~\ref{sec:cx}); this almost matches the trivial $O(n^2)$ upper bound, which hence cannot be improved by polynomial factors even for $n$-vertex outerplanar graphs of maximum degree three.
    For $n$-vertex cycles with $2$ disjoint chords, there exists an $n$-vertex convex geometric graph with $O(n^{3/2})$ edges (Theorem~\ref{thm:twochords} in Section~\ref{sec:cx}), which matches the lower bound above.
	\item For every $n\in \N$, there exists a convex geometric graph $G$ with $n$ vertices
	and $O(n\log n)$ edges that is universal for $n$-vertex caterpillars
    (Theorem~\ref{thm:caterpilar} in Section~\ref{sec:cx}).
\end{itemize}

\section{Universal Geometric Graphs for Forests}
\label{sec:geotree}

In this section, we prove the following theorem.

\begin{theorem}\label{thm:main}
	For every $n\in \N$, there exists a geometric graph $G$ with $n$ vertices and
	$O(n\log n)$ edges that is universal for forests with $n$ vertices.
\end{theorem}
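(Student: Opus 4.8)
The plan is to build a geometric analogue of the Chung--Graham universal graph for forests. First, it suffices to handle \emph{trees}: every $n$-vertex forest with $k$ components is a spanning subgraph of an $n$-vertex tree obtained by adding $k-1$ edges, and the restriction of a crossing-free embedding of the tree is a crossing-free embedding of the forest. So I aim for a geometric graph $G$ on $n$ points $p_1,\dots,p_n$ with $O(n\log n)$ edges that contains a crossing-free straight-line drawing of every $n$-vertex tree.

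The construction will be recursive and block-decomposable. Equip $G$ with a binary hierarchy of blocks of consecutive points ($\{p_1,\dots,p_n\}$ at the root, split into two halves, recursively down to singletons), place the points so that each block occupies its own convex region and sibling regions are disjoint, and require that $G$ restricted to any block of the hierarchy is a scaled copy of the same construction. The edges are: (a) the path $p_ip_{i+1}$; (b) for every block $B$ of the hierarchy, a skip-list-type set of edges from the endpoint of $B$ facing its parent region (a \emph{port}) to the points inside $B$, arranged so that the port can reach every point of $B$ using only edges inside $B$; and (c) the edges inherited from the recursive copies. Counting level by level — at level $j$ there are $2^j$ blocks of size about $n/2^j$, each contributing $O(n/2^j)$ port edges, hence $O(n)$ per level — yields $|E(G)|=O(n\log n)$, which by the cited lower bound for caterpillars is optimal.

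To embed a tree $T$, root it and apply a heavy-path / centroid-type decomposition: $T$ becomes a spine together with subtrees hanging off it, each of at most half the size, recursively decomposed to depth $O(\log n)$. I then lay the spine along the ports of a nested chain of blocks and route each hanging subtree recursively into its own sub-block, always mapping the root of a recursively-embedded subtree to the port of its block, i.e.\ to the block endpoint nearest the spine vertex it must join. Every edge of $T$ is then either a spine/port edge present by construction or an internal edge of some block handled by the induction; and since the subtrees hung off the spine are routed into pairwise disjoint convex sub-regions while all edges leaving a given spine vertex emanate from one point, no two images of edges of $T$ cross. Extending the forest to a tree and computing this layout is polynomial, so the embedding is algorithmic.

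The main obstacle is to produce a \emph{single} point configuration and edge set meeting all requirements simultaneously: exactly $n$ points, every hierarchy block a faithful scaled copy, every one of the exponentially many recursive layouts crossing-free, and still only $O(n\log n)$ edges even though a star forces some vertex to have degree $n-1$ and every tree must locate within $G$ a spine whose vertices carry enough spare high-degree capacity in the right places. The tension is between the edge budget and the need to accommodate arbitrarily high-degree vertices at arbitrary positions; it is resolved, as in the combinatorial setting, by the skip-list port pattern together with the freedom to choose which side of each balanced split is the small one (so of size at most $\lfloor (n-1)/2\rfloor$). Verifying that all of this can be realized by an actual planar point set without crossings — not merely as an abstract graph — is the technical crux of the proof.
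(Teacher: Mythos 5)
Your reduction from forests to trees is fine, and your overall plan (a hierarchical point set with $O(n\log n)$ edges plus a recursive tree decomposition) is in the spirit of the paper, which adapts the Chung--Graham construction. But there is a genuine gap, and you name it yourself: neither the construction nor the crossing-free embedding is actually carried out, and as specified the construction would not work. The trouble is exact packing. Since the universal graph has exactly $n$ vertices, embedding an $n$-vertex tree uses every point, so when you cut $T$ into a spine with hanging subtrees you must place each hanging subtree onto a set of consecutive points of exactly its size. These sizes are arbitrary, so the point groups do not align with your fixed dyadic block hierarchy; a subtree then straddles block boundaries, and your port edges (block endpoints joined to points inside that block) provide neither an edge from the spine vertex to the root of a subtree embedded in such an arbitrary interval, nor a guarantee that a suitable hub vertex exists inside an arbitrary interval. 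The paper's construction is designed precisely to supply this: it takes a complete binary tree $B$ and adds edges from every vertex to all descendants of itself, of its level-neighbors, and of its parent's left level-neighbor (groups (E1)--(E3)), which is what makes \emph{every} interval of consecutive vertices, of any length and at any position, contain a spanning star (Lemma~\ref{lem:stars}); combined with the cut lemma (Lemma~\ref{lem:cut}), this is what lets arbitrary subtree sizes be matched to arbitrary subintervals. Your heavy-path/centroid decomposition does not remove this difficulty, since the hanging subtrees still have arbitrary sizes.

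The second unresolved point is planarity of the recursive layout. The argument ``disjoint convex sub-regions plus edges emanating from one point'' does not account for edges that leave a block to reach a spine vertex placed elsewhere: such edges can cross edges internal to sibling blocks or other spine edges unless the point placement is tightly controlled. The paper handles this with a specific coordinate scheme ($x$-coordinates by preorder, $y$-coordinates by a BFS order with rapidly growing gaps so that every vertex lies above every line through two lower vertices), which yields Observation~\ref{obs:stretched} and the quarter-plane invariants of Lemma~\ref{lem:main}, and it additionally needs the crossing-isomorphism machinery (Lemmas~\ref{lem:almost-intervals}, \ref{lem:deletion}, and \ref{lem:replacement}) to recurse on intervals whose highest vertex has been deleted or replaced. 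None of this---which you correctly identify as the technical crux---is present in your proposal, so the proof is incomplete.
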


\subsection{Construction}
\label{ssec:construction}

We adapt a construction due to Chung and Graham~\cite{cg-ugst-83},
originally designed for abstract graphs, to the geometric setting.
For a given $n\in \N$, they construct a graph $G$ with
$n$ vertices and $O(n\log n)$ edges such that $G$ contains every forest on $n$
vertices as a subgraph. Let us sketch this construction. For simplicity assume
that $n=2^h-1$, for some integer $h\ge 2$. Let $B$ be a complete rooted ordered binary tree on $n$ vertices. A \emph{level} is a set of vertices at the same distance from the root. The levels are labeled $1,\ldots,h$, from the one containing the root to the one that contains the leaves of $B$. A preorder traversal of $B$ (which consists first of the root, then recursively of the vertices in its left subtree, and then recursively of the vertices in its right subtree) determines a total order on the vertices, which also induces a total order on the vertices in each level of $B$. On each level, we call two consecutive elements in this order \emph{level-neighbors};
in particular, any two siblings are level-neighbors.
For a vertex $v$ of $B$, we denote by $B(v)$ the subtree of $B$ rooted at $v$.
The graph $G$ contains $B$ and three additional groups of edges defined as follows
(see \figurename~\ref{fig:1} for an illustration).

\begin{enumerate}[(E1)]\itemsep 0pt
	\item Every vertex $v$ is adjacent to all vertices in the
	subtree $B(v)$;
	\item every vertex $v$ that has a left or right level-neighbor $u$ in $B$
	is adjacent to all vertices in the subtree $B(u)$; and
	\item every vertex $v$ whose parent has a left level-neighbor $p$
	is adjacent to all vertices in the subtree $B(p)$.
\end{enumerate}

\begin{figure}[htb]
	\centering
	\noindent\begin{minipage}[b]{.61\linewidth}
		\noindent\includegraphics[scale=0.9]{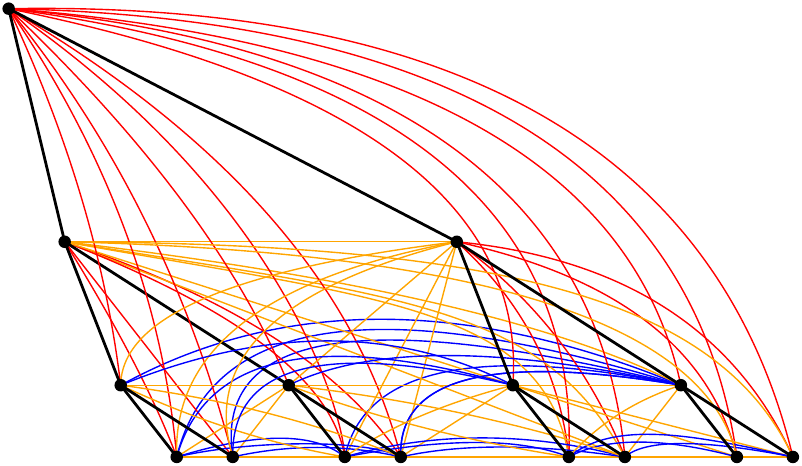}
	\end{minipage}\hfill
	\noindent\begin{minipage}[b]{.38\linewidth}
		\hfill\includegraphics[scale=0.9]{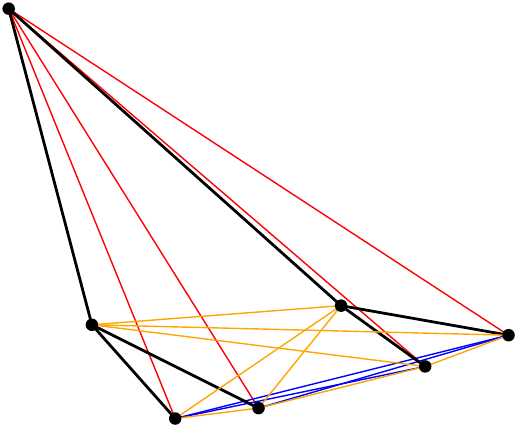}
	\end{minipage}
	\caption{A schematic drawing of the universal graph for $n=15$ vertices (left) and a geometric drawing of the universal graph for $n=7$ vertices. The edges of the tree $B$ are shown black; the edges of the groups (E1), (E2), and (E3) are shown red, orange, and blue, respectively. Edges that belong to several of these groups are shown in the color of the first group they belong to.\label{fig:1}}
\end{figure}

\paragraph{Number of edges.}
It is easily checked that $G$ has $O(n\log n)$
edges.  Indeed, the binary tree $B$ has $2^{i-1}$ vertices on level $i$, for
$i=1,\ldots,h$.  A vertex $v$ on level $i$ has $2^{h-i+1}-1$ descendants (including itself), and its at most two level-neighbors have the same number of descendants. In addition, the left level-neighbor of the parent of $v$ (if
present) has $2\cdot(2^{h-i+1}-1)$ descendants (excluding itself). Altogether
$v$ is adjacent to less than $5\cdot 2^{h-i+1}$ vertices at the
same or at lower levels of $B$. Hence, the number of edges in $G$ is less than
\[
5\cdot \sum_{i=1}^h 2^{i-1} \cdot 2^{h-i+1} = 5\cdot 2^h \cdot h = 5(n+1)\cdot \log_2 (n+1)  \in O(n \log n).
\]

Chung and Graham~\cite{cg-ugst-83} showed that $G$ is universal for forests,
that is, $G$ contains every forest on $n$ vertices as a subgraph.\footnote{In
	fact, the construction by Chung and Graham uses fewer edges: in the edge groups (E2) and (E3) in the definition of $G$, they only use siblings instead of left and right level-neighbors. But we were unable to verify their proof with the smaller edge
	set. Specifically, we do not see why the graph $G_2$ in
	\cite[Fig.~7]{cg-ugst-83} is admissible. However, their proof works with the
	larger edge set we define here.}

\paragraph{Geometric representation.}
We next describe how to embed the vertices of $G$ into $\R^2$;
see \figurename~\ref{fig:assignment}(left) for an illustration.
First, the $x$-coordinates of the vertices are assigned in the order determined by a
preorder traversal of $B$. For simplicity, let us take
these $x$-coordinates to be $0,\ldots,n-1$, so that the
root of $B$ is placed on the $y$-axis. The vertex of $G$ with $x$-coordinate
$i$ is denoted by $v_i$.

The $y$-coordinates of the vertices are determined by a BFS traversal of $B$ starting from the root, in which at every vertex the right sibling is visited before the left sibling. If a vertex $u$ is visited before a vertex $v$ by this traversal, then $u$ gets a larger $y$-coordinate than $v$. The gap between two consecutive $y$-coordinates
is chosen so that every vertex is above every line through two vertices with smaller $y$-coordinates; this implies that, for any vertex $v$, all vertices with larger $y$-coordinate than $v$, if any, see the vertices below $v$ in the same circular order as $v$. The vertices of $G$ are in general position, that is, no three are
collinear.

Our figures display the vertices of $B$ in the correct $x$- and $y$-order,
but---with the exception of \figurename~\ref{fig:1}(right)---they are not to scale.
The $y$-coordinates in our construction are rapidly increasing (similarly to constructions in~\cite{bmn-lbwen-11,FulekT15}).
For this reason, in our figures we draw the edges in $B$ as straight-line segments and all other edges as Jordan arcs.
In particular, we have the following property (see \figurename~\ref{fig:assignment}(right) for an example).
\begin{obs} \label{obs:stretched}
	If $ab, cd\in E(G)$ such that
	(1)~$a$ has larger $y$-coordinate than $b$, $c$, and $d$, and
	(2)~$b$ has smaller or larger $x$-coordinate than both $c$ and $d$, then $ab$ and $cd$ do not cross.
\end{obs}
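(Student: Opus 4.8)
The plan is to reduce the non-crossing claim to a comparison of two affine functions of $x$ on an interval. First I would normalise using symmetry: both the hypothesis and the conclusion are invariant under the reflection $x\mapsto -x$, so I may assume that $b$ has smaller $x$-coordinate than both $c$ and $d$; and since $c$ and $d$ occur symmetrically, I may relabel so that $x_c<x_d$. Thus $x_b<x_c<x_d$. Recall that in this construction all $x$-coordinates are distinct (they are $0,\dots,n-1$) and all $y$-coordinates are distinct (they come from a BFS order); in particular none of the lines $bc$, $bd$, $cd$ is vertical, so I may speak unambiguously of the \emph{height} of each of these lines at a prescribed $x$-coordinate and of a point being \emph{above} such a line.

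The case $x_a<x_c$ is immediate: then the segment $ab$ is contained in $\{x\le\max(x_a,x_b)\}$ and $\max(x_a,x_b)<x_c$, so the $x$-projections of $ab$ and of $cd\subseteq\{x_c\le x\le x_d\}$ are disjoint, and the two segments cannot cross. So assume $x_a>x_c$. Then $x_a>x_c>x_b$, the $x$-projections of $ab$ and $cd$ overlap exactly in the non-degenerate interval $I=[x_c,\min(x_a,x_d)]$, and over $I$ each of $ab$ and $cd$ is the graph of an affine function of $x$; write these as $f_{ab}$ and $f_{cd}$. Since $f_{ab}-f_{cd}$ is affine, it suffices to prove that it is \emph{positive at both endpoints} of $I$: then it is positive throughout $I$, the two graphs are disjoint over $I$, and — the segments being disjoint outside $I$ for trivial reasons — $ab$ and $cd$ do not cross.

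Checking the two endpoints is where the rapidly-increasing $y$-coordinates enter, through the stated property that every vertex lies above every line through two vertices of smaller $y$-coordinate. At the left endpoint $x=x_c$: since $y_a$ exceeds both $y_b$ and $y_c$, the vertex $a$ lies above line $bc$, hence the segment $ab$ lies weakly above line $bc$ and strictly above it except at $b$; as $x_b<x_c<x_a$, the point of $ab$ at $x=x_c$ is not $b$ and therefore lies strictly above line $bc$, which passes through $c$ at $x=x_c$. Thus $f_{ab}(x_c)>y_c=f_{cd}(x_c)$. At the right endpoint $x=\min(x_a,x_d)$ there are two sub-cases. If it equals $x_a$ (so $x_a<x_d$), then $f_{ab}(x_a)=y_a$, whereas $f_{cd}(x_a)$ is the height of line $cd$ at $x_a$, which is below $y_a$ because $a$ lies above line $cd$ (using $y_a>y_c,y_d$); so $f_{ab}(x_a)>f_{cd}(x_a)$. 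If it equals $x_d$ (so $x_b<x_c<x_d<x_a$, and $x_d$ is strictly inside the $x$-range of $ab$), then $f_{cd}(x_d)=y_d$, while the point of $ab$ at $x=x_d$ is not $b$ and hence lies strictly above line $bd$ (using $y_a>y_b,y_d$), which passes through $d$ at $x=x_d$; so $f_{ab}(x_d)>y_d=f_{cd}(x_d)$. In every case both endpoint values are positive, finishing the argument.

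I do not expect a genuine obstacle here; the only points needing care are (a) selecting the correct instance of the height property each time — always with $a$ as the high vertex and a line spanned by two of $b,c,d$ — and (b) noticing that at the right endpoint one may need line $bd$ rather than line $cd$, according to whether $a$ or $d$ has the larger $x$-coordinate. Reducing everything to the sign of one affine function on the common $x$-interval is what keeps the case analysis short, and the geometry of the construction is used only via the three applications of the ``above every line'' property.
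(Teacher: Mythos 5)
Your proof is correct. It differs in mechanics from the paper's argument, though both hinge on the same feature of the construction, namely that every vertex lies above every line through two vertices of smaller $y$-coordinate, always applied with $a$ as the high vertex. The paper's proof is a one-line separation argument: after observing that (up to swapping the labels of $c$ and $d$) one may assume $d$ lies below the line through $b$ and $c$, it invokes the construction property once to place $a$ above that line, so that $ab$ and $cd$ lie in opposite closed half-planes and meet the line only at the distinct points $b$ and $c$. You instead reduce the claim to the sign of the affine function $f_{ab}-f_{cd}$ on the common $x$-interval and verify positivity at its two endpoints, which costs a short case analysis ($x_a$ versus $x_c$, then $x_a$ versus $x_d$) and up to three applications of the above-the-line property, with the lines $bc$, $cd$, and $bd$ respectively. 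What your route buys is that it avoids the small orientation fact implicit in the paper's WLOG (that if $d$ is not below line $bc$ then $c$ is below line $bd$), at the price of more bookkeeping; the paper's route is shorter and exhibits a single separating line. Your handling of degenerate situations is sound, since hypothesis (2) forces $b\notin\{c,d\}$ and condition (1) together with distinct $x$- and $y$-coordinates rules out the remaining coincidences and vertical lines.
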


\begin{proof}
By (2) we can assume, without loss of generality up to a switch of the labels of $c$ and $d$, that $d$ is below the line through $b$ and $c$. By (1) and by construction, $a$ is above the line through $b$ and $c$, hence $ab$ and $cd$ are separated by the line through $b$ and $c$ and thus do not cross.
\end{proof}

\begin{figure}[htb]
	\centering
	\begin{minipage}[b]{.63\linewidth}
		\centering\includegraphics[scale=0.8]{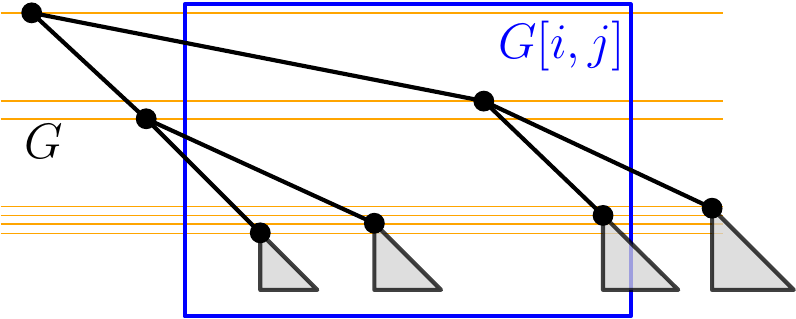}
	\end{minipage}
	\begin{minipage}[b]{.3\linewidth}
		\centering\includegraphics[scale=0.8]{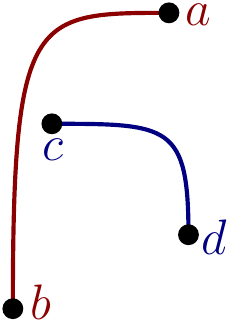}
	\end{minipage}
	\caption{Illustration for the assignment of $x$- and $y$-coordinates to the vertices of $G$, and for the definition of interval (left). Illustration for Observation~\ref{obs:stretched}(right).}
	\label{fig:assignment}
\end{figure}

\subsection{Intervals and Embeddings}
\label{ssec:intervals}

For every interval $[i,j]\subseteq[0,n-1]$ we define $G[i,j]$ to be the subgraph of $G$
that is induced by the vertices with $x$-coordinates in $[i,j]$.
We call the graph $G[i,j]$ an \emph{interval} of $G$.
The \emph{length} of an interval $G[i,j]$ of $G$ is defined as $|G[i,j]|=j-i+1$, which is the number of vertices in $G[i,j]$.
If $I$ is an interval of integers, then we denote by $G(I)$ the corresponding interval of $G$.
For a subset $U\subset V(G)$, we denote by $G[U]$ the subgraph of $G$ induced by $U$.

In Section~\ref{ssec:main}, we present a recursive algorithm that can embed every tree on $h$ vertices into every interval of length $h$ of $G$. In some cases, we embed the root of a tree at some vertex of the interval, and recurse on the subtrees.
For this strategy, it is convenient to embed the root at the center of a spanning star. 
The following lemma shows that every interval contains spanning stars.

\begin{lemma}\label{lem:stars}
	Every interval $G[i,j]$ of $G$ on at least two vertices contains two
	spanning stars: one is centered at the highest vertex $v_k$; another is
	centered at the second highest vertex $v_s$ of $G[i,j]$.
    If $k<j$, then $G[i,j]$	contains a spanning star centered at the highest vertex of
	$G[k+1,j]$ (which may or may not be $v_s$).
\end{lemma}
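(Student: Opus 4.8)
The plan is to reduce everything to edge membership in $E(G)$: a star is automatically non-crossing (its edges all share the center), and no edge of $G$ passes through a third vertex (the vertices are in general position), so it suffices to show, using only the edge groups (E1)--(E3), that each of the three claimed centers is $G$-adjacent to every other vertex of $G[i,j]$.

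The backbone is a structural fact. Let $\ell$ be the smallest level of a vertex of $G[i,j]$, and let $w_1,w_2,\dots$ be the level-$\ell$ vertices of $B$ from left to right; since a preorder traversal visits each subtree consecutively, the subtrees $B(w_1),B(w_2),\dots$ occupy pairwise-disjoint equal-length blocks of $x$-coordinates, listed from left to right, each with its root at its left end. I claim $G[i,j]$ meets at most two such blocks; if it meets two, they are siblings; and if it meets only $B(w_a)$, then $w_a$ is a vertex of $G[i,j]$. The one thing to observe is that two consecutive level-$\ell$ vertices $w,w'$ that are not siblings have a vertex of level $<\ell$ between their blocks: $w'$ cannot be a right child (else the level-$\ell$ vertex just before it would be its left sibling, i.e.\ $w$), so $w'$ is a left child and its parent, of level $\ell-1$, is visited right before $B(w')$ and thus lies strictly between $B(w)$ and $B(w')$. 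Hence if $G[i,j]$ met two consecutive non-sibling blocks it would, being an $x$-interval, contain that vertex of level $<\ell$, contradicting the minimality of $\ell$; so consecutive blocks met by $G[i,j]$ are siblings, and since a vertex has at most one sibling, at most two blocks are met. The one-block assertion holds because $w_a$ is the only level-$\ell$ vertex of $B(w_a)$ while $G[i,j]$ contains one.

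Since the BFS order behind the $y$-coordinates visits shallower levels first and, within a level, right siblings before left siblings, the highest vertex $v_k$ of $G[i,j]$ is $w_a$ in the one-block case and the right sibling $w_{a+1}$ in the two-block case. In the one-block case $v_k$ is adjacent to all of $G[i,j]\subseteq B(w_a)$ by (E1); in the two-block case $v_k=w_{a+1}$ reaches $B(w_{a+1})$ by (E1) and $B(w_a)$ by (E2) (with $w_a$ as left level-neighbor). For the other two stars I would apply the part just proved to sub-intervals. Since $v_k$ sits at the left end of its block, $k=i$ in the one-block case, and in the two-block case $G[k+1,j]$ lies in $B(v_k)\setminus\{v_k\}$ and begins at the left child of $v_k$; either way the minimum level in $G[k+1,j]$ is exactly one more than the level of $v_k$, so its highest vertex $v_t$ is a child of $v_k$. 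Then $v_t$ reaches all of $G[k+1,j]$ by the first part, reaches $v_k$ by (E1), and --- in the two-block case --- reaches $B(w_a)\supseteq G[i,k-1]$ by (E3), since the parent $v_k=w_{a+1}$ of $v_t$ has left level-neighbor $w_a$; this is the star at $v_t$, needed when $k<j$. For $v_s$: in the one-block case $v_s$ is the highest vertex of $G[i+1,j]=G[k+1,j]$, so $v_s=v_t$ and we are done; in the two-block case, if $w_a$ is a vertex of $G[i,j]$ then it has level $\ell$, so $v_s=w_a$, which reaches $B(w_a)$ by (E1) and $B(w_{a+1})$ by (E2); otherwise a look at the $y$-order gives $v_s=v_t$ when $k<j$, and $v_s=$ the highest vertex of $G[i,k-1]\subseteq B(w_a)$ when $k=j$, in which last case $v_s$ reaches $G[i,k-1]$ by the first part and reaches $v_k=w_{a+1}$ by (E2) applied \emph{at} $w_{a+1}$.

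The subtle point --- where I would take most care --- is this last sub-case: when $v_s$ lies deep inside the sibling subtree $B(w_a)$, no invocation of (E1)--(E3) \emph{at $v_s$} produces the edge $v_sv_k$ (all of $v_s$'s long edges run downward or into a level-neighbor's subtree); that edge is present only because it is one of the edges (E2) creates \emph{at $v_k$}, the right level-neighbor of $v_s$'s block-root $w_a$. So the real work is bookkeeping --- deciding, in each of the few cases, which endpoint supplies each required edge --- and the block-structure fact is what keeps the case list short.
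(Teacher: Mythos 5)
Your proof is correct and follows essentially the same route as the paper's: it splits the interval into the subtree of $v_k$ and that of its left sibling, identifies the same three star centers ($v_k$, its child $v_t$, i.e.\ the highest vertex of $G[k+1,j]$, and the left sibling $w_a$ when it lies in the interval, with the same fallback case $k=j$), and supplies each edge from the same groups (E1)--(E3), including getting the edge $v_sv_k$ from (E2) applied at $v_k$ exactly as the paper does (``part of the star centered at $v_k$''). Your explicit minimal-level block decomposition is just a slightly more systematic packaging of the containments ($G[i,k-1]\subseteq B(w_a)$, $G[k,j]\subseteq B(v_k)$) that the paper argues inline.
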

\begin{proof}
	We first argue for the star centered at $v_k$. By construction (preorder
	traversal and increasing $y$-coordinates along each level from left to right), all vertices in $G[k,j]\subseteq G[i,j]$ belong to $B(v_k)$. By construction, $G$ contains edges from $v_k$ to every vertex in $B(v_k)$.  This completes the proof if $k=i$. Assume that $k>i$. Then $v_k$ has a parent $v_p$; further, we have $p<i$, because $v_k$ is the highest
	vertex of $G[i,j]$ and every vertex is higher than its descendants.
	Therefore, $v_k$ has a left sibling $v_\ell$ (which may or may not be in $G[i,j]$);
	and all the vertices in $G[i,k-1]$ are in $B(v_\ell)$ and hence are adjacent to $v_k$.
    As the vertices are laid out in general position, every star in $G[i,j]$ is noncrossing.
	
	We now argue about the second highest vertex $v_s$ of $G[i,j]$. We consider two cases.
	
	If $k=j$, then $v_s$ is the highest vertex of $G[i,j-1]$. Therefore, $G[i,j-1]$ contains a spanning star
		centered at $v_s$, as argued above if $i<j-1$; if $i=j-1$, then such a star
		trivially exists.  The remaining edge between $v_s$ and $v_k$ exists, as it is
		part of the star centered at $v_k$.
	
	Hence, we may assume that $k<j$. We now show that the highest vertex
		$v_t$ in $G[k+1,j]$ is the center of a spanning star for $G[i,j]$. Note
		that $v_t$ is a child of $v_k$: Namely, since no vertex in $G[i,j]$ is higher than $v_k$, it follows that neither the right level-neighbor of $v_k$, if it exists, nor a vertex on a higher level than $v_k$ are in $G[i,j]$, hence $v_t$ is either the
		left or the right child of $v_k$. Recall that $v_k$ has a parent $v_p$ with
		$p<i$.  Therefore, $v_k$ has a left sibling $v_\ell$, which
		may be in $G[i,j]$ or not; regardless, all the vertices in $G[i,k-1]$ are in $B(v_\ell)$ and are hence adjacent to $v_t$. Further, the edge between $v_t$ and $v_k$ exists, as it is
		part of the star centered at $v_k$. Finally, each vertex in $G[k+1,j]$ is either in $B(v_t)$, hence it is adjacent to $v_t$ as each vertex is adjacent to all its descendants, or in $B(v_q)$, where $v_q$ is the child of $v_k$ different from $v_t$,  hence it is adjacent to $v_t$ as each vertex is adjacent to all the descendants of a sibling.
		
		If $v_s=v_t$, our proof is complete. Let us assume that $v_s$ is in $G[i,k-1]$.
		Then $v_s$ must be on a higher level than $v_{k+1}$, which is the left child of $v_k$, and on a lower level than $v_p$. Hence, $v_s$ is the left sibling of $v_k$, which implies $p=s-1$ and $s=i$. Therefore, $v_s$ is adjacent to all the vertices in $G[i,k-1]$, which are its descendants, as well as to all the vertices of $G[k,j]$, which are descendants of its right-level neighbor $v_k$; hence, $v_s$ is adjacent to all the vertices of $G[i,j]$.
\end{proof}

The recursive algorithm (Section~\ref{ssec:main}) sometimes embeds a subtree of $T$ onto an induced subgraph of $G$ that is ``almost'' an interval, in the sense that it can be obtained from an interval of $G$ by deleting its highest vertex or by replacing its highest vertex with a vertex that does not belong to the interval. Lemma~\ref{lem:deletion} and Lemma~\ref{lem:replacement} below provide the tools to construct such embeddings.

We first prove that the ``structure'' of an interval without its highest vertex is similar to that of an interval; this is formalized by the following definition.
Let $U$ and $W$ be two subsets of $V(G)$ with $h=|U|=|W|$. Let $u_1,\dots,u_h$ and $w_1,\dots,w_h$ be the vertices of $U$ and $W$, respectively, ordered by increasing $x$-coordinates. We say that $G[U]$ and $G[W]$ are \emph{crossing-isomorphic} if the following conditions are satisfied:

\begin{enumerate}[(C1)]\itemsep 0pt
	\item For any two distinct integers $p,q\in\{1,\dots,h\}$, the edge $u_pu_q$ belongs to $G[U]$ if and only if the edge $w_pw_q$ belongs to $G[W]$.
	\item For any four distinct integers $p,q,r,s\in \{1,\dots,h\}$ such that the edges $u_pu_q$ and $u_ru_s$ belong to $G[U]$, the edge $u_pu_q$ crosses the edge $u_ru_s$ if and only if the edge $w_pw_q$ crosses the edge $w_rw_s$.
	\item If $u_i$ is the highest vertex of $G[U]$, for some $i\in \{1,\dots,h\}$, then $w_i$ is the highest vertex of $G[W]$.
\end{enumerate}

In this case, the graph isomorphism given by $\lambda(v_i)=w_i$, for all $i=1,\ldots , n$, is a \emph{crossing-isomorphism}.
Clearly, the inverse of a crossing-isomorphism is a crossing-isomorphism.
We have the following.

\begin{lemma}\label{lem:almost-intervals}
	Let $v_k$ be the highest vertex in an interval $G[i,j]$, and
    assume that $G[i,j]$ contains neither the right
	child of $v_k$ nor any descendant of the left child of its left sibling (if
	it exists). Then $G[i,j]-v_k$ is crossing-isomorphic to some interval $G(I)$ of $G$;
    the interval $I$ can be computed in $O(1)$ time.
\end{lemma}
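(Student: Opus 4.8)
~The plan is to peel off the degenerate cases and then reduce the main case to a ``shift'' isomorphism between two sibling subtrees. First suppose $k=i$, i.e.\ $v_k$ is the leftmost vertex of the interval. The first vertex of $B$ after the subtree $B(v_k)$, if it exists, lies at a level no deeper than that of $v_k$ and, if at the same level, to the right of $v_k$; since within a level of $B$ the $y$-order agrees with the $x$-order, that vertex is higher than $v_k$ and hence not in $G[i,j]$. So $G[i,j]\subseteq B(v_k)$, whence $G[i,j]-v_k=G[i+1,j]$ is itself an interval and $I=[i+1,j]$ works with the identity crossing-isomorphism. From now on $k>i$; then $v_{k-1}\in G[i,j]$ is not the parent of $v_k$ (that parent is higher than $v_k$), so $v_k$ is the right child of its parent $v_p$. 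Let $v_\ell$ be the left sibling of $v_k$ and $t$ its level. If $v_k$ is a leaf, one checks directly that $G[i,j]=\{v_{k-1},v_k\}$, and $I=[k-1,k-1]$ works.

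In the remaining case $v_k$ has children $v_a$ (left) and $v_b$ (right), and $v_\ell$ has a right child $v_e$; writing $m'=|B(v_a)|$, one computes $B(v_e)=[k-m',k-1]$, $B(v_a)=[k+1,k+m']$ and $B(v_b)=[k+m'+1,k+m-1]$. Now the two hypotheses do their work: ``$v_b\notin G[i,j]$'' forces $j\le k+m'$, and ``no descendant of the left child of $v_\ell$ lies in $G[i,j]$'' forces $i\ge k-m'$. Hence $G[i,j]$ consists of a suffix $L$ of the preorder of $B(v_e)$ (the vertices with $x$-coordinate in $[i,k-1]$), followed by $v_k$, followed by a possibly empty prefix $R$ of the preorder of $B(v_a)$ (the vertices with $x$-coordinate in $[k+1,j]$). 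I would then take $I=[i+m'+1,\,j+m']$ --- a valid interval of length $j-i=|G[i,j]|-1$ --- and define $\lambda$ on $L\cup R=V(G[i,j])\setminus\{v_k\}$ to be the map that increases the $x$-coordinate by $m'+1$ on $L$ and by $m'$ on $R$; equivalently, $\lambda$ restricts to the natural subtree-isomorphism $B(v_e)\to B(v_a)$ on $L$ and to the natural subtree-isomorphism $B(v_a)\to B(v_b)$ on $R$. This $\lambda$ is a bijection onto $V(G(I))$, it is monotone in $x$, and --- since $v_e,v_a,v_b$ all lie at level $t+1$ and the natural subtree-isomorphisms preserve the level of each vertex --- it preserves the level and the $x$-order, hence also the $y$-order, of every vertex of $L\cup R$.

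It then remains to verify the three conditions defining a crossing-isomorphism. Condition (C3) is immediate, since $\lambda$ preserves the $y$-order. For (C1) I would run a short case analysis on the edge types: any edge of $G$ with both endpoints inside $B(v_e)$ (resp.\ inside $B(v_a)$) is determined by the tree structure, the vertex levels, and the level-neighbour relation inside that subtree --- here one must check that an (E2)- or (E3)-edge leaving the subtree never lands back inside it --- so such edges are preserved by the natural isomorphism onto $B(v_a)$ (resp.\ $B(v_b)$); and the edges of $G$ running between $B(v_e)$ and $B(v_a)$ are exactly the (E2)/(E3)-edges at the ``seam'' where, at every level below $t$, $B(v_e)$ sits immediately to the left of $B(v_a)$ (they are separated there only by $v_k$), and under $\lambda$ these correspond precisely to the (E2)/(E3)-edges at the analogous seam between $B(v_a)$ and $B(v_b)$. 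For (C2) I would first record a short fact, in the spirit of Observation~\ref{obs:stretched}, that in this point configuration (super-increasing $y$-gaps, general position) whether two edges cross depends only on the $x$-order and the $y$-order of their four endpoints; then (C2) follows because $\lambda$ preserves both orders on $L\cup R$. Finally, $I$ is given by the explicit formula $[i+m'+1,j+m']$ (or by $[i+1,j]$ or $[k-1,k-1]$ in the degenerate cases), so it is computable in $O(1)$ time.

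I expect the main obstacle to be the edge-type case analysis for (C1): one has to make sure that (E2)/(E3)-edges reaching out of a subtree do not ``sneak back in'' onto a relevant vertex, and that the only genuine edges between $B(v_e)$ and $B(v_a)$ occur at the predictable seam between consecutive sibling subtrees, so that they transport cleanly under the shift $\lambda$. Condition (C2), superficially the most intimidating, collapses to order-preservation once the layout fact above is available, and (C3) is essentially free.
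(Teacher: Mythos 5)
Your proposal is correct and follows essentially the same route as the paper: use the two hypotheses to see that $G[i,j]-v_k$ consists of a suffix of the right child subtree of the left sibling of $v_k$ followed by a prefix of $B(v_{k+1})$, transport it by a shift onto a genuine interval via the natural subtree isomorphisms, and then verify (C1)--(C3), with (C3) free from order preservation. The only differences are cosmetic: the paper shifts left, taking $I=[i-D,j-D-1]$ with $D=m'$ (landing in the two child subtrees of the left sibling) rather than your right shift onto $[i+m'+1,j+m']$ (landing in $B(v_a)\cup B(v_b)$), and it proves (C2) by an explicit case analysis which amounts to the fact you invoke --- that with the super-increasing $y$-gaps, crossings are determined by the $x$- and $y$-orders of the four endpoints --- while your seam analysis for (C1) is exactly the check the paper carries out on its mirror-image seam.
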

\begin{proof}
If $k=i$ or $k=j$, then $G[i,j]-v_k$ is the interval $G[i+1,j]$ or $G[i,j-1]$, respectively, and the conclusion is immediate.  Assume that $i<k<j$ and that the subtrees of $B$ rooted at the children of $v_k$ have height $\ell$.  Thus, $B(v_{k+1})$ has $D:=2^\ell-1$	vertices. Let $I=[i-D,j-D-1]$. We prove that $G[i,j]-v_k$ is crossing-isomorphic to $G(I)$. Let $u_1,\dots,u_h$ be the vertices of $G[i,j]-v_k$, ordered by increasing $x$-coordinates; further, let $w_1,\dots,w_h$ be the vertices of $G(I)$, ordered by increasing $x$-coordinates. Refer to \figurename~\ref{fig:crossing-isomorphic}.
	
\begin{figure}[htb]
	\centering\includegraphics[scale=.8]{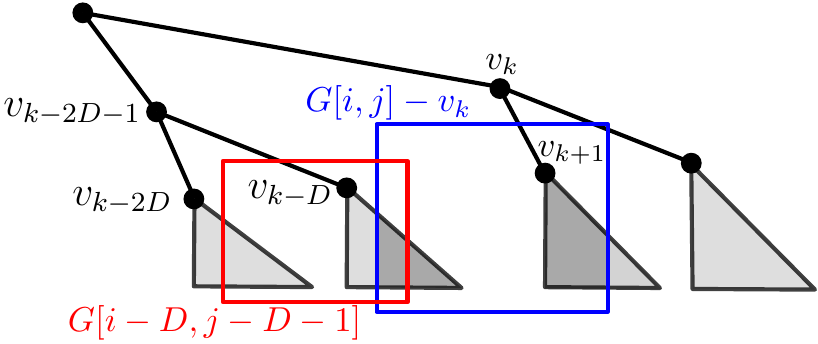}
	\caption{The interval $G[i,j]-v_k$ is crossing-isomorphic to the interval $G[i-D,j-D+1]$.\label{fig:crossing-isomorphic}}
\end{figure}
	
Since $k-1\in [i,j]$ and $v_k$ is the highest vertex in $G[i,j]$, it follows that $v_k$ is not the root of $G$ and $v_{k-1}$ is not the parent of $v_k$, hence $v_k$ is the right child of its parent and has a left sibling $v_{k-2D-1}$. Since $G[i,j]$ does not contain any descendant of the left child of the left sibling of $v_k$, it follows that $G[i,k-1]$ is a subgraph of $G$ induced by vertices in the right subtree of the left sibling of $v_k$. Specifically, $G[i,k-1]$ is induced by the last $k-i<D$ vertices (in a preorder traversal) of the subtree $B(v_{d-D})$ of height $\ell$, rooted at the right child $v_{k-D}$ of $v_{k-2D-1}$. Further, $G[i-D,k-D-1]$ consists of the last $k-i$ vertices (in a preorder traversal) of the  subtree $B(v_{k-2D-1})$ of height $\ell$, rooted at the left child $v_{k-2D}$ of $v_{k-2D-1}$. Hence, $G[i,k-1]$ is crossing-isomorphic to $G[i-D,k-D-1]$.
	
Since $v_{k+1}$ is the left child of $v_k$ and since the right child of $v_k$ is not a vertex of $G[i,j]$, it follows that $G[k+1,j]$ is the subgraph of $G$ induced by the first $j-k$ vertices (in a preorder traversal) of the subtree $B(v_{k+1})$ of height $\ell$. Further, $G[k-D,j-D-1]$ consists of the first $j-k$ vertices (in a preorder traversal) of subtree $B(v_{k-D})$ of height $\ell$. Hence, $G[k+1,j]$ is crossing-isomorphic to $G[k-D,j-D-1]$.

\paragraph{Condition~(C1).}
In order to prove that Condition~(C1) is satisfied (that is, $G[i,j]-v_k$ is isomorphic to $G[I]$), it remains to argue about the edges of $G[i,j]-v_k$ between $G[i,k-1]$ (i.e., vertices in $\{u_1,\dots,u_{k-i}\}$) and $G[k+1,j]$ (i.e., vertices in $\{u_{k-i+1},\dots,u_h\}$), and about the edges of $G(I)$ between $G[i-D,k-D-1]$ (that is, vertices in $\{w_1,\dots,w_{k-i}\}$) and $G[k-D,j-D-1]$ (that is, vertices in $\{w_{k-i+1},\dots,w_h\}$). Consider an edge $u_pu_q$ in $G[i,j]-v_k$, where $p\in[1,k-i]$ and $q\in[k-i+1,h]$. Since a vertex in $G[i,k-1]$ is neither an ancestor nor a descendant of a vertex in $G[k+1,j]$, it follows that $u_pu_q$ belongs to the edge group~(E2) or~(E3). We distinguish between two cases:
\begin{itemize}
\item Suppose that $u_p$ is a descendant of the left level-neighbor $u_l$ of $u_q$, for some $l\in [1,p-1]$. Then $w_l$ is the left level-neighbor of $w_q$ and $w_p$ is a descendant of $w_l$. Hence, the edge $w_pw_q$ belongs to the edge group~(E2). Similarly, if $u_q$ is a descendant of the right level-neighbor $u_r$ of $u_p$, for some $r\in [k-i+1,q-1]$, then $w_r$ is the right level-neighbor of $w_p$ and $w_q$ is a descendant of $w_r$. Hence, the edge $w_pw_q$ belongs to the edge group~(E2).
\item Suppose that $u_p$ is a descendant of the left level-neighbor $u_l$ of the parent $u_r$ of $u_q$, for some $l\in [1,p-1]$ and $r\in [k-i+1,q-1]$. Then $w_r$ is the parent of $w_q$; further, $w_l$ is the left level-neighbor of $w_r$; finally, $w_p$ is a descendant of $w_l$. Hence, the edge $w_pw_q$ belongs to the edge group~(E3).
\end{itemize}

Analogously, given an edge $w_pw_q$ of $G(I)$ with $p\in[1,k-i]$ and $q\in[k-i+1,h]$, the edge $u_pu_q$ belongs to $G[i,j]-v_k$. This concludes the proof that Condition~(C1) is satisfied. 	

\paragraph{Condition~(C2).}
We now prove that Condition~(C2) is satisfied. The proof exploits the following property: a vertex $u_p$ of $G[i,j]-v_k$ is visited before a vertex $u_q$ of $G[i,j]-v_k$ in the BFS traversal of $B$ that determines the $y$-coordinate of the vertices of $G$ if and only if the vertex $w_p$ of $G(I)$ is visited before the vertex $w_q$ of $G(I)$ in the same traversal. This property follows from the fact that the vertices $u_1,\dots,u_{i-k}$ (the vertices  $w_1,\dots,w_{i-k}$) are the last $i-k$ vertices in a preorder traversal of a subtree $B(v_{k-D})$ (resp., of a subtree $B(v_{k-2D})$) of $B$ of height $\ell$, that the vertices  $u_{i-k+1},\dots,u_h$ (the vertices  $w_{i-k+1},\dots,w_h$) are the first $h-i+k$ vertices in a preorder traversal of a subtree $B(v_{k+1})$ (resp., of a subtree $B(v_{k-D})$) of $B$ of height $\ell$, and that the vertices of $B(v_{k-D})$ (resp., of $B(v_{k-2D})$) precede the vertices of $B(v_{k+1})$ (resp., of $B(v_{k-D})$) in a preorder traversal of~$B$.

Consider two arbitrary edges $u_pu_q$ and $u_ru_s$ of $G[i,j]-v_k$. We prove that $u_pu_q$ and $u_ru_s$ cross each other if and only if the edges $w_pw_q$ and $w_rw_s$ of $G(I)$ cross each other. Since the vertices of $G$ are in general position, we can assume that the indices $p$, $q$, $r$, and $s$ are all distinct (as if two among such indices coincide, then $u_pu_q$ and $u_ru_s$ do not cross each other, and neither do the edges $w_pw_q$ and $w_rw_s$). Assume w.l.o.g.\ that $p<q$ and that $p<r<s$. If $q<r$, then $u_pu_q$ and $u_ru_s$ use disjoint $x$-intervals and hence do not cross each other; further, $w_pw_q$ and $w_rw_s$ use disjoint $x$-intervals and hence do not cross each other.

There are two remaining cases to consider, namely $p<r<s<q$ and $p<r<q<s$.

Assume first that $p<r<s<q$. If $u_p$ or $u_q$ is visited first among $u_p$, $u_q$, $u_r$, and $u_s$ in the BFS traversal of $B$ that determines the $y$-coordinates of the vertices of $G$, then $w_p$ or $w_q$ is visited first among $w_p$, $w_q$, $w_r$, and $w_s$ in the same traversal. By construction, $u_p$ or $u_q$ is the highest vertex among $u_p$, $u_q$, $u_r$, and $u_s$; and $w_p$ or $w_q$ is the highest vertex among $w_p$, $w_q$, $w_r$, and $w_s$. By Observation~\ref{obs:stretched}, $u_pu_q$ and $u_ru_s$ do not cross each other and $w_pw_q$ and $w_rw_s$ do not cross each other. Assume hence that $u_r$ or $u_s$, say $u_r$, is visited first among $u_p$, $u_q$, $u_r$, and $u_s$ in the BFS traversal of $B$ that determines the $y$-coordinate of the vertices of $G$; thus, $w_r$ is visited first among $w_p$, $w_q$, $w_r$, and $w_s$ in the same traversal. By construction, $u_r$ is the highest vertex among $u_p$, $u_q$, $u_r$, and $u_s$, and $w_r$ is the highest vertex among $w_p$, $w_q$, $w_r$, and $w_s$. We further distinguish between two cases.

\begin{itemize}
	\item If $u_s$ is visited before $u_p$ and $u_q$ in the BFS traversal of $B$ that determines the $y$-coordinate of the vertices of $G$, then $w_s$ is visited before $w_p$ and $w_q$ in the same traversal. Hence, the $y$-coordinate of $u_s$ is larger than those of $u_p$ and $u_q$ (and the $y$-coordinate of $w_s$ is larger than those of $w_p$ and $w_q$). It follows that $u_pu_q$ and $u_ru_s$ use disjoint $y$-intervals and hence do not cross each other, and similarly $w_pw_q$ and $w_rw_s$ use disjoint $y$-intervals and hence do not cross each other.
	\item If $u_s$ is visited after $u_p$ or $u_q$ (possibly both) in the BFS traversal of $B$ that determines the $y$-coordinate of the vertices of $G$, then $w_s$ is visited after $w_p$ or $w_q$ (possibly both) in the same traversal. By construction, either $u_p$ or $u_q$, whichever is visited first by the BFS traversal of $B$, is assigned a $y$-coordinate large enough so that it lies above the line through $u_s$ and the other point in $\{u_p,u_q\}$ that is visited second by the BFS traversal of $B$. It follows that $u_s$ lies below the line through $u_p$ and $u_q$ and hence the edges $u_pu_q$ and $u_ru_s$ cross each other. Similarly, $w_s$ lies below the line through $w_p$ and $w_q$ and hence the edges $w_pw_q$ and $w_rw_s$ cross each other.
\end{itemize}

Assume next that $p<r<q<s$. Further, assume that the vertex that is visited first among $u_p$, $u_q$, $u_r$, and $u_s$ in the BFS traversal of $B$ that determines the $y$-coordinate of the vertices of $G$ is either $u_p$ or $u_q$, as the case in which it is one of $u_r$ and $u_s$ is analogous (by replacing $p$ with $s$ and $q$ with $r$, respectively). We now distinguish between two cases.

\begin{itemize}
\item Assume that $u_p$ is visited first among $u_p$, $u_q$, $u_r$, and $u_s$ in the BFS traversal of $B$ that determines the $y$-coordinate of the vertices of $G$. Then $w_p$ is visited first among $w_p$, $w_q$, $w_r$, and $w_s$ in the BFS traversal of $B$ that determines the $y$-coordinate of the vertices of $G$. By construction, $u_p$ is the highest vertex among $u_p$, $u_q$, $u_r$, and $u_s$; further, $w_p$ is the highest vertex among $w_p$, $w_q$, $w_r$, and $w_s$.
	\begin{itemize}
	\item If $u_q$ is visited before $u_r$ and $u_s$ in the BFS traversal of $B$ that determines the $y$-coordinate of the vertices of $G$, then $w_q$ is visited before $w_r$ and $w_s$ in the same traversal. Hence, the $y$-coordinate of $u_q$ is larger than those of $u_r$ and $u_s$ and the $y$-coordinate of $w_q$ is larger than those of $w_r$ and $w_s$. It follows that $u_pu_q$ and $u_ru_s$ use disjoint $y$-intervals and hence do not cross each other, and similarly $w_pw_q$ and $w_rw_s$ use disjoint $y$-intervals and hence do not cross each other.
	\item If $u_q$ is visited after $u_r$ or $u_s$ (possibly both) in the BFS traversal of $B$ that determines the $y$-coordinate of the vertices of $G$, then $w_q$ is visited after $w_r$ or $w_s$ (possibly both) in the same traversal. By construction, $u_r$ or $u_s$, whichever is visited first by the BFS traversal of $B$, is assigned a $y$-coordinate large enough so that it lies above the line through $u_q$ and the vertex in $\{u_r,u_s\}$ that is visited second by the BFS traversal of $B$. It follows that $u_q$ lies below the line through $u_r$ and $u_s$ and hence the edges $u_pu_q$ and $u_ru_s$ cross each other. Similarly, $w_s$ lies below the line through $w_p$ and $w_q$ and hence the edges $w_pw_q$ and $w_rw_s$ cross each other.
	\end{itemize}	
\item Assume next that $u_q$ is visited first among $u_p$, $u_q$, $u_r$, and $u_s$ in the BFS traversal of $B$ that determines the $y$-coordinate of the vertices of $G$. Then $w_q$ is visited first among $w_p$, $w_q$, $w_r$, and $w_s$ in the BFS traversal of $B$ that determines the $y$-coordinate of the vertices of $G$. By construction, $u_q$ is the highest vertex among $u_p$, $u_q$, $u_r$, and $u_s$; further, $w_q$ is the highest vertex among $w_p$, $w_q$, $w_r$, and $w_s$. By construction, $u_q$ lies above the straight line through $u_p$ and $u_r$, and above the straight line through $u_p$ and $u_s$. Conversely, the segment $u_ru_s$ lies between these two straight lines. It follows that $u_pu_q$ and $u_ru_s$ do not cross each other. Similarly, $w_q$ lies above the straight line through $w_p$ and $w_r$, and above the straight line through $w_p$ and $w_s$. Conversely, the segment $w_rw_s$ lies between these two straight lines. It follows that $w_pw_q$ and $w_rw_s$ do not cross each other.
\end{itemize}

This concludes the proof that Condition~(C2) is satisfied. 	

\paragraph{Condition~(C3).}
Finally, note that $v_{k+1}$ is the highest vertex of $G[i,j]-v_k$ and $v_{k-D}$ is the highest vertex of $G[i-D,j-D-1]$. Since $v_{k+1}=u_{k-i+1}$ and $v_{k-D}=w_{k-i+1}$, Condition~(C3) follows.

This concludes the proof that $G[i,j]-v_k$ and $G(I)$ are crossing-isomorphic.
\end{proof}

We are now ready to present our tools for embedding trees onto ``almost'' intervals. The first one deals with subgraphs of $G$ obtained by deleting the highest vertex from an interval.

\begin{lemma}\label{lem:deletion}
Let $v_k$ be the highest vertex in an interval $G[i,j]$ with $h+1$ vertices. Suppose that there is a crossing-isomorphism $\lambda$ from $G[i,j]-v_k$ to some interval $G(I)$ of $G$ with $h$ vertices.
Further, suppose that a tree $T$ with $h$ vertices admits an embedding $\phi$ onto $G(I)$. Then $\phi'=\lambda^{-1} \circ \phi$ is an embedding of $T$ onto $G[i,j]-v_k$, and if $a$ is the vertex of $T$ such that $\phi(a)$ is the highest vertex of $G(I)$, then $\phi'(a)$ is the highest vertex of $G[i,j]-v_k$.
\end{lemma}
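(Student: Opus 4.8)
The plan is to verify directly that $\phi'=\lambda^{-1}\circ\phi$ satisfies the three defining properties of an embedding, transporting each property of $\phi$ across the crossing-isomorphism. The single ingredient I would lean on throughout is the remark made just before the lemma statement that $\lambda^{-1}$ is again a crossing-isomorphism, now going from $G(I)$ to $G[i,j]-v_k$; so Conditions~(C1)--(C3) are available with $G(I)$ as the source and $G[i,j]-v_k$ as the target.

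First I would record that $\phi'$ is injective, being the composition of the injective map $\phi$ with the bijection $\lambda^{-1}$; since $|V(T)|=h=|V(G[i,j]-v_k)|$, it is in fact a vertex bijection, which is what ``onto'' requires. Next, for an edge $xy\in E(T)$ I would use that $\phi(x)\phi(y)\in E(G(I))$ (as $\phi$ is an embedding) and apply Condition~(C1) for $\lambda^{-1}$ to conclude $\phi'(x)\phi'(y)=\lambda^{-1}(\phi(x))\,\lambda^{-1}(\phi(y))\in E(G[i,j]-v_k)$. For the non-crossing requirement I would take two distinct edges $x_1y_1,x_2y_2\in E(T)$ and split into two cases. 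If they share an endpoint, then by injectivity their $\phi'$-images are two distinct segments of $G$ with a common endpoint, which cannot cross because the vertices of $G$ are in general position (the same observation rules out a vertex of $G[i,j]-v_k$ lying in the relative interior of such a segment). If the four endpoints $x_1,y_1,x_2,y_2$ are pairwise distinct, then their $\phi$-images are four distinct vertices of $G(I)$, the segments $\phi(x_1)\phi(y_1)$ and $\phi(x_2)\phi(y_2)$ do not cross since $\phi$ is an embedding, and Condition~(C2) for $\lambda^{-1}$ transfers this non-crossing to $\phi'(x_1)\phi'(y_1)$ and $\phi'(x_2)\phi'(y_2)$. Together these three points show that $\phi'$ is an embedding of $T$ onto $G[i,j]-v_k$.

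For the final claim I would simply invoke Condition~(C3) for the crossing-isomorphism $\lambda^{-1}$ from $G(I)$ to $G[i,j]-v_k$: if $\phi(a)$ is the highest vertex of $G(I)$, then $\phi'(a)=\lambda^{-1}(\phi(a))$ is the highest vertex of $G[i,j]-v_k$.

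I do not expect a real obstacle here: the statement is essentially a repackaging of Conditions~(C1)--(C3), and most of the work was already done in Lemma~\ref{lem:almost-intervals} when the crossing-isomorphism $\lambda$ was produced. The only mildly delicate point is the degenerate case of two edges of $T$ that share a vertex, where Condition~(C2) does not apply directly and one instead appeals to general position of the point set underlying $G$; everything else is a direct substitution.
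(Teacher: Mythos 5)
Your proposal is correct and takes essentially the same route as the paper: both verify that $\phi'$ inherits edge-preservation, non-crossing, and the highest-vertex property by transporting Conditions~(C1)--(C3) across the crossing-isomorphism (the paper applies the conditions to $\lambda$ directly, indexing vertices by $x$-order, whereas you invoke them for $\lambda^{-1}$; these are equivalent since the inverse of a crossing-isomorphism is a crossing-isomorphism). Your explicit handling of two edges sharing an endpoint via general position is a minor refinement of a point the paper leaves implicit.
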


\begin{proof}
Let $u_1,\dots,u_h$ and $w_1,\dots,w_h$ be the vertices of $G(I)$ and $G[i,j]-v_k$, respectively, ordered by increasing $x$-coordinates. Let $a_1,\dots,a_h$ be the vertices of $T$ ordered so that $\phi(a_i)=u_i$, for $i=1,\dots,h$. Note that $\phi'=\lambda^{-1} \circ \phi$ yields $\phi'(a_i)=w_i$ for $i=1,\dots,h$. We now prove that $\phi'$ is an embedding of $T$ onto $G[i,j]-v_k$ with the properties stated in the lemma.

First, for every $p,q\in \{1,\dots,h\}$ such that $a_pa_q$ is an edge in $T$, we have that $\phi'(a_p)\phi'(a_q)=w_pw_q$ is an edge in $G[i,j]-v_k$. In particular, $\phi(a_p)\phi(a_q)=u_pu_q$ is an edge in $G(I)$, given that $\phi$ is an embedding of $T$ onto $G(I)$. By Condition~(C1) for $\lambda$, $w_pw_q$ is an edge of $G[i,j]-v_k$.

Second, for every $p,q,r,s\in \{1,\dots,h\}$ such that $a_pa_q$ and $a_ra_s$ are distinct edges of $T$, we have that $\phi'(a_p)\phi'(a_q)=w_pw_q$ and $\phi'(a_r)\phi'(a_s)=w_rw_s$ do not cross each other. In particular, $\phi(a_p)\phi(a_q)=u_pu_q$ and $\phi(a_r)\phi(a_s)=u_ru_s$ do not cross each other, given that $\phi$ is an embedding of $T$ onto $G(I)$. By Condition~(C2) for $\lambda$, this implies that $w_pw_q$ and $w_rw_s$ do not cross each other.

Finally, let $a_t$ be the vertex of $T$ such that $\phi(a_t)=u_t$ is the highest vertex of $G(I)$. Condition~(C3) for $\lambda$ implies that $w_t$ is the highest vertex of $G[i,j]-v_k$. By construction, $\phi'(a_t)=w_t$.
This concludes the proof of the lemma.
\end{proof}

The second tool deals with subgraphs of $G$ obtained by replacing the highest vertex of an interval with another ``high'' vertex outside the interval; see \figurename~\ref{fig:replacement} for an illustration.

\begin{lemma}\label{lem:replacement}
Let $G[i,j]$ be an interval of $G$ with $h$ vertices and let $v_k$ be its highest vertex.
Let $v_x$ be a vertex of $G$ that is higher than all vertices in $G[i,j]-v_k$ and that does not belong to $G[i,j]$.
Suppose that a tree $T$ with $h$ vertices admits an embedding $\phi$ onto $G[i,j]$.
Let $a$ be the vertex of $T$ such that $\phi(a)=v_k$; further, let $\phi'(a)=v_x$ and $\phi'(b)=\phi(b)$ for every vertex $b$ of $T$ other than $a$. Then $\phi'$ is an embedding of $T$ onto $G[i,j]-v_k+v_x$.
\end{lemma}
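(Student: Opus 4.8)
The plan is to verify, one at a time, the three requirements for $\phi'$ to be an embedding of $T$ onto $G[i,j]-v_k+v_x$: that $\phi'$ is injective with image exactly $V(G[i,j]-v_k+v_x)$, that every edge of $T$ is mapped to an edge of $G$, and that no two edges of $T$ are mapped to crossing segments. The first point is immediate: $\phi'$ agrees with the bijection $\phi\colon V(T)\to V(G[i,j])$ except that it sends $a$ to the vertex $v_x\notin V(G[i,j])$, so $\phi'$ is a bijection onto $(V(G[i,j])\setminus\{v_k\})\cup\{v_x\}$. Likewise, any edge $bc$ of $T$ with $a\notin\{b,c\}$ is mapped by $\phi'$ to the same edge $\phi(b)\phi(c)$ of $G$ as by $\phi$, and both its endpoints lie in $V(G[i,j])\setminus\{v_k\}$, so it is an edge of $G[i,j]-v_k+v_x$.

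The first genuine step is to handle the edges of $T$ incident to $a$: for such an edge $ab'$ we must show that $v_x\phi(b')$ is an edge of $G$. Since $b'\neq a$, the vertex $\phi(b')$ lies in $G[i,j]-v_k$, so it suffices to prove the claim that $v_x$ is adjacent in $G$ to \emph{every} vertex of $G[i,j]-v_k$. I would prove this by a case analysis on the position of $v_k$ in $B$, mirroring the proof of Lemma~\ref{lem:stars}: there, using that $v_k$ is the highest vertex of $G[i,j]$, one shows that (in the non-degenerate case) $v_k$ has a left sibling $v_\ell$ with $G[i,k-1]\subseteq B(v_\ell)$ while $G[k+1,j]\subseteq B(v_k)$. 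Being a ``high'' vertex outside $G[i,j]$, the vertex $v_x$ is forced to be of one of a few types relative to $v_k$ --- an ancestor of $v_k$, or a level-neighbor of $v_k$ or of one of its ancestors --- and for each such type one of the edge groups (E1)--(E3) makes $v_x$ adjacent to every vertex of $B(v_k)$ and of $B(v_\ell)$, hence to all of $G[i,j]-v_k$. I expect this adjacency claim to be the main obstacle: it is the only place where both the detailed structure of the construction and the full strength of the hypothesis on $v_x$ are really used, and it requires carefully enumerating the possibilities for $v_x$.

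It remains to check that $\phi'$ maps no two edges of $T$ to crossing segments. If neither edge is incident to $a$, their images agree with those under $\phi$ and so do not cross. If the two edges share a vertex (in particular if both are incident to $a$), their images share an endpoint, and since the vertices of $G$ are in general position the images do not cross. So assume the two edges are $ab'$ and $cd$ with $a,b',c,d$ pairwise distinct, and write $p=\phi(b')$, $q=\phi(c)$, $r=\phi(d)$; these three vertices lie in $G[i,j]-v_k$, so $v_x$ and $v_k$ are each higher than $p$, $q$, $r$, and therefore, by the spacing of the $y$-coordinates, both $v_x$ and $v_k$ lie strictly above every line through two of $p,q,r$, in particular above the line $\ell$ through $q$ and $r$. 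If $p$ is not strictly between $q$ and $r$ in $x$-coordinate, then Observation~\ref{obs:stretched}, applied with $v_x$ in the role of the topmost vertex and $p$ in the role of $b$, shows at once that $v_xp$ and $qr$ do not cross. If $p$ is strictly between $q$ and $r$ in $x$-coordinate, I claim $p$ lies above $\ell$: if instead $p$ were below $\ell$, then applying a vertical shear that makes $\ell$ horizontal --- which preserves $x$-coordinates, the ``above'' relation, and crossings --- and computing directly, one finds that ``$v_k$ lies above both lines $pq$ and $pr$'' is equivalent to ``$q$ and $r$ lie on opposite sides of the line through $v_k$ and $p$'', which together with $v_k$ above $\ell$ and $p$ below $\ell$ forces the segments $v_kp$ and $qr$ to cross --- contradicting that $\phi$ is an embedding. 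Hence $p$ lies above $\ell$, so both endpoints of $v_xp$ lie in the open half-plane above $\ell$, and therefore $v_xp$ is disjoint from $qr\subseteq\ell$. This exhausts all cases and completes the proposal.
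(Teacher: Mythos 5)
There is a genuine gap in your adjacency step. You reduce the edge condition to the claim that $v_x$ is adjacent in $G$ to every vertex of $G[i,j]-v_k$, to be proved by arguing that the hypotheses force $v_x$ to be an ancestor of $v_k$, or a level-neighbor of $v_k$ or of one of its ancestors. That structural trichotomy, and the adjacency claim itself, are false under the lemma's hypotheses: being higher than all vertices of $G[i,j]-v_k$ only says that $v_x$ precedes those vertices in the BFS order that fixes the $y$-coordinates, and many vertices combinatorially unrelated to the interval satisfy this. Concretely, take $n=15$ (preorder labels $v_0,\dots,v_{14}$), $[i,j]=[3,4]$, so that $G[3,4]$ consists of the two children $v_3,v_4$ of $v_2$ and its highest vertex is $v_k=v_4$, and take $v_x=v_{13}$, the left child of $v_{12}$ in the right half of $B$. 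Then $v_{13}$ is higher than $v_3$ (the bottom level is ordered $v_{14},v_{13},v_{11},v_{10},v_7,v_6,v_4,v_3$ by decreasing height) and lies outside $G[3,4]$, but it is none of your listed types with respect to $v_4$, and it is not adjacent to $v_3$: by (E1)--(E3) the neighborhood of $v_3$ is $\{v_0,v_1,v_2,v_4,v_5,v_6,v_7,v_8,v_9,v_{12}\}$. So with $T$ a single edge and $\phi$ mapping it onto $v_4v_3$, the segment $v_{13}v_3$ is not an edge of $G$, and your key intermediate claim cannot be established from the stated hypotheses; a finer enumeration of cases cannot save it.

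For context, the paper's own proof does not attempt to establish edge existence at $v_x$ at all: it asserts that only the noncrossing condition needs to be checked and verifies exactly that, along the same lines as your last paragraph (your treatment of the case where $\phi(b')$ lies $x$-between $\phi(c)$ and $\phi(d)$, ruling out ``below the line through $\phi(c)$ and $\phi(d)$'' because it would force $v_k\phi(b')$ to cross $\phi(c)\phi(d)$ in $\phi$, is the paper's argument, and your appeal to Observation~\ref{obs:stretched} covers its remaining cases, so that half of your proposal is fine). The existence of the edges incident to $v_x$ is really supplied only where the lemma is applied: in Case~1.2.3 $v_x$ is the left sibling of $v_k$, and in Case~1.2.5.1 $v_x$ is the right child of $v_k$; in those situations $v_x$ is adjacent to the relevant vertices (descendants of $v_k$'s children and of $v_k$'s left sibling) by (E1)--(E3), much as in the proof of Lemma~\ref{lem:stars}. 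So your instinct that the edge condition requires an argument is sound, but it needs the extra structural information available at the call sites (or a correspondingly strengthened hypothesis on $v_x$), not just ``higher than the interval minus its top vertex and outside the interval''; as written, that step of your proposal fails.
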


\begin{figure}[htb]
	\centering\includegraphics[scale=.8]{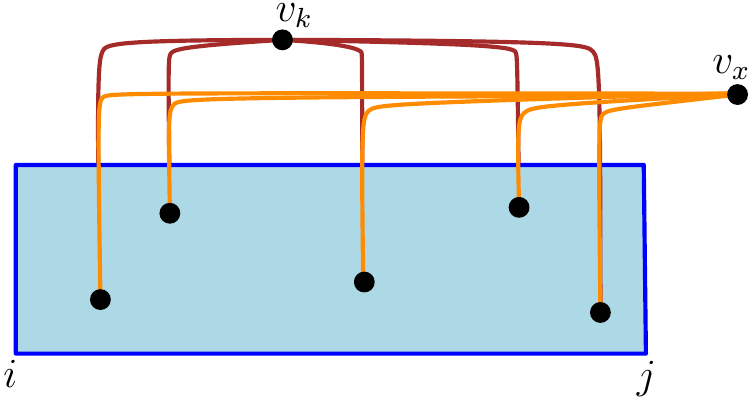}
	\caption{Illustration for the statement of Lemma~\ref{lem:replacement}.}\label{fig:replacement}
\end{figure}

\begin{proof}
By construction, we have $\phi'(b)=\phi(b)$ for every vertex $b$ of $T$ different from $a$; since $\phi$ is an embedding of $T$ onto $G[i,j]$, we only need to prove that each edge incident to $a$ does not cross any other edge of $T$ in $\phi'(T)$. Let $ab$ and $cd$ two edges of $T$, and let $v_p:=\phi'(b)$, $v_q:=\phi'(c)$, and $v_r:=\phi'(d)$, where $p,q,r\in [i,j]$. We prove that $v_xv_p$ and $v_qv_r$ do not cross each other. Assume, w.l.o.g., that $q<r$. Further, assume that $x>j$, as the case in which $x<i$ is symmetric.

\begin{itemize}
	\item If $r<p$, then $v_xv_p$ and $v_qv_r$ do not cross each other as they use disjoint $x$-intervals.
	\item If $p<q$, then $v_xv_p$ and $v_qv_r$ do not cross each other by Observation~\ref{obs:stretched}.
	\item Finally, assume that $q<p<r$. If $v_p$ is visited before $v_q$ and $v_r$ in the BFS traversal of $B$ that determines the $y$-coordinates of the vertices of $G$, then the $y$-coordinate of $v_p$ is larger than the one of $v_q$ and $v_r$, hence $v_xv_p$ and $v_qv_r$ do not cross each other as they use disjoint $y$-intervals. We now show that the case in which $v_p$ is visited after $v_q$ or after $v_r$ (possibly after both) in the BFS traversal of $B$ that determines the $y$-coordinate of the vertices of $G$ does not occur. Suppose the contrary, for a contradiction. It follows that the one between $v_q$ and $v_r$ that is visited first by the BFS traversal of $B$ is assigned a $y$-coordinate large enough so that it lies above the line through $v_p$ and the one between $v_q$ and $v_r$ that is visited second by the BFS traversal of $B$. Hence, $v_p$ lies below the line through $v_q$ and $v_r$. However, this implies $v_kv_p$ and $v_qv_r$ cross each other, contradicting the assumption that $\phi$ is an embedding of $T$ onto $G[i,j]$.
	\end{itemize}

This concludes the proof of the lemma.
\end{proof}

The following lemma is a variant of the (unique) lemma in~\cite{cg-ugst-83}.
It finds a subtree of a certain size in a rooted tree.

\begin{lemma}\label{lem:cut}
	Given a rooted tree $T$ on $m\ge 2$ vertices and an integer $s$, with
	$1\le s\le m$, there is a vertex $c$ of $T$ such that $|V(T(c))|\geq s$ but
	$|V(T(d))|\leq s-1$, for all children $d$ of $c$.
    Such a vertex $c$ can be computed in time $O(m)$.
\end{lemma}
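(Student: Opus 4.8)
The plan is to find the desired vertex $c$ by walking down from the root, at each step moving into a "heavy enough" child, and to argue that the walk must stop at a vertex with the claimed property. First I would compute, in a single post-order traversal of $T$ in $O(m)$ time, the subtree size $|V(T(v))|$ for every vertex $v$; this is a standard linear-time computation since $|V(T(v))| = 1 + \sum_{d} |V(T(d))|$ over the children $d$ of $v$. Now set $c_0$ to be the root, so that $|V(T(c_0))| = m \ge s$ by hypothesis. Given $c_i$ with $|V(T(c_i))| \ge s$, if every child $d$ of $c_i$ satisfies $|V(T(d))| \le s-1$, then $c_i$ is the vertex we want and we stop; otherwise there is a child $d$ of $c_i$ with $|V(T(d))| \ge s$, and we set $c_{i+1} := d$. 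Since each step strictly increases the depth of the current vertex, and $T$ is finite, the process terminates; and it cannot "fall off" the tree, because a leaf $\ell$ has $|V(T(\ell))| = 1 \le s$ (using $s \ge 1$), so if the current vertex is a leaf with subtree size $\ge s$ then necessarily $s = 1$, its (empty) set of children vacuously satisfies the bound $\le s-1 = 0$, and we stop there. Hence the walk stops at some vertex $c$ with $|V(T(c))| \ge s$ all of whose children $d$ satisfy $|V(T(d))| \le s-1$, which is exactly the conclusion.

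For the running time, the size computation is $O(m)$, and the downward walk visits a path in $T$, inspecting at each visited vertex the sizes of its children; charging each such inspection to the corresponding child, the total work over the whole walk is $O(m)$ as well, since each vertex is inspected at most once as a child. Thus the vertex $c$ is found in $O(m)$ time overall.

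There is no real obstacle here; the only point requiring a little care is the boundary behavior of the walk, i.e.\ confirming that it never needs to descend past a leaf. This is handled by the observation above: at a leaf, the subtree size is $1$, so having subtree size $\ge s$ forces $s=1$ and the leaf itself already satisfies the (vacuous) condition on its children, so the walk halts. The case $m \ge 2$ in the hypothesis is used only to ensure $T$ has at least one edge so that the statement is non-degenerate; the argument itself goes through whenever $1 \le s \le m$.
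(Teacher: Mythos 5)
Your proof is correct and follows essentially the same approach as the paper: a downward walk from the root that moves into any child whose subtree has at least $s$ vertices, stopping when no such child exists, with subtree sizes precomputed in $O(m)$ time. The extra care you give to termination and the leaf boundary is fine but not substantively different from the paper's argument.
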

\begin{proof}
	We find vertex $c$ by the following walk on $T$ starting from the root.
	Initially, let $c$ be the root of $T$. While $c$ has a child $d$ such that
	$|V(T(d))|\geq s$, then set $c:=d$. At the end of the while loop,
	$|V(T(c))|\geq s$ but $|V(T(d))|\leq s-1$, for all children $d$ of $c$.
    After precomputing the weight of the subtree $T(v)$ for every vertex $v$ of $T$, 
    the while loop runs in $O(m)$ time.
\end{proof}

\subsection{Proof of Theorem~\ref{thm:main}}
\label{ssec:main}

Given a tree $T$ on $h$ vertices and an interval $G[i,j]$ of length $h$, we describe an algorithm that recursively constructs an embedding $\phi$ of $T$ onto $G[i,j]$. For a subtree $T'$ of $T$, we denote by $\phi(T')$ the image of $\phi$ restricted to the vertices and edges of $T'$. A step of the algorithm explicitly embeds some vertices; the remaining vertices form subtrees that are recursively embedded into pairwise disjoint subintervals of $G[i,j]$. In order to control the interaction between the recursively embedded subtrees and the edges connecting vertices of such subtrees to explicitly embedded vertices, we insist that in every subtree at most two vertices, called
\emph{portals}, are adjacent to \emph{external} vertices (i.e., that are not
part of the subtree). We also ensure that whenever a subtree is embedded onto a
subinterval, the external vertices that connect to the portals of that subtree are embedded above the whole
subinterval.

For a point $p\in \R^2$, we define two quarter-planes: Let $Q^+(p)=\{q\in \R^2: x(p) < x(q) \text{ and } y(p) < y(q)\}$ denote the set of points above and to the right of $p$; similarly, let $Q^-(p)=\{q\in \R^2: x(q)< x(p) \text{ and } y(p) < y(q)\}$ denote the set of points above and to the left of $p$. In Lemma~\ref{lem:main} below, we require that these regions are empty of vertices and edges of the embedded graph, so that they can be used to add edges incident to $p$, when a portal is embedded onto it.

We inductively prove the following lemma, which immediately implies Theorem~\ref{thm:main}
with $G[i,j]=G[0,n-1]$ and a portal $a$ chosen arbitrarily.

\begin{lemma}\label{lem:main}
	We are given a tree $T$ on $h$ vertices, an interval $G[i,j]$ of length $h$, and
	\begin{enumerate}\itemsep 0pt
		\item either a single portal $a$ in $T$ or
		\item two distinct portals $a$ and $b$ in $T$.
	\end{enumerate}
	Then there exists an embedding $\phi$ of $T$ onto $G[i,j]$ with the following properties:
	\begin{enumerate}
		\item If only one portal is given, then
		\begin{enumerate}[(a)]
			\item $\phi(a)$ is the highest vertex in $G[i,j]$; and
			\item\label{cond:quad:1} if $\deg_T(a)=1$ and $a'$ is the unique neighbor of $a$ in $T$,
			then $Q^-(\phi(a'))$ does not intersect any vertex or edge of the embedding $\phi(T(a'))$.
		\end{enumerate}
		\item If two distinct portals are given, then 
		\begin{enumerate}[(a)]
			\item $\phi(a)$ is to the left of $\phi(b)$;
			\item $Q^-(\phi(a))$ does not intersect any edge or vertex of $\phi(T)$; and
			\item $Q^+(\phi(b))$ does not intersect any edge or vertex of $\phi(T)$.
		\end{enumerate}
	\end{enumerate}
\end{lemma}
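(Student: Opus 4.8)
\textbf{Proof plan for Lemma~\ref{lem:main}.}

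The plan is to prove the statement by induction on $h$, and within each step by a case analysis on the structure of $T$ relative to the portals. The base case $h=1$ is trivial: $T$ is a single vertex, which must be the portal $a$ (only case~1 applies), we embed it onto the unique vertex of $G[i,j]$, and conditions~1(a),(b) hold vacuously. For the inductive step, the main tool is Lemma~\ref{lem:cut}, which lets us locate a ``balanced'' vertex $c$ in $T$ (rooted at an appropriate portal): a vertex whose subtree has at least a prescribed size but each of whose children's subtrees is strictly smaller. Removing $c$ (or a small neighborhood of $c$) from $T$ splits $T$ into $T(c)$ and a remaining piece $T^-$; by the choice of $c$ via Lemma~\ref{lem:cut} one can arrange that $T(c)$ is roughly ``half'' of $T$ while each pendant subtree below $c$ is small. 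The idea is then to allocate a sub-interval of $G[i,j]$ to each recursively-embedded piece and invoke the induction hypothesis, feeding in the attachment point(s) as the portal(s) of that piece.

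The execution splits into the one-portal case and the two-portal case. In the one-portal case (root $T$ at $a$), I would first handle the degenerate sub-case $\deg_T(a)=1$: let $a'$ be the unique neighbor and recurse on $T(a')$ with $a'$ as single portal into the interval $G[i,j]-v_k$, where $v_k$ is the highest vertex of $G[i,j]$; Lemma~\ref{lem:stars} gives that $v_k$ is the center of a spanning star of $G[i,j]$, so $\phi(a)=v_k$ can be joined to $\phi(a')$, and one must check that $G[i,j]-v_k$ is (crossing-isomorphic to) an interval --- this is exactly Lemma~\ref{lem:almost-intervals} together with Lemma~\ref{lem:deletion} --- and that condition~\ref{cond:quad:1} transfers. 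When $\deg_T(a)\ge 2$, apply Lemma~\ref{lem:cut} with $s\approx h/2$ to find $c$; put $\phi(a)$ at the highest vertex $v_k$ of $G[i,j]$, embed $T(c)$ into the rightmost (or leftmost) sub-interval of appropriate length with $c$ as single portal, and embed the remainder $T-T(c)$ (still rooted at $a$, with $c$'s parent acting as the second portal) into the complementary sub-interval using the two-portal case of the induction hypothesis. The star from Lemma~\ref{lem:stars} centered at $v_k$ supplies the edge from $\phi(a)$ down to the sub-interval containing the attachment; Observation~\ref{obs:stretched} and the quarter-plane emptiness conditions from the induction hypothesis guarantee that the newly added edge (from $\phi(a)$ to the portal of $T(c)$, routed through the empty quarter-plane $Q^-$ or $Q^+$) crosses nothing.

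The two-portal case is the most delicate and I expect it to be the main obstacle. Here $T$ has two portals $a,b$; consider the path $P$ between them in $T$. One natural strategy: walk along $P$ and use Lemma~\ref{lem:cut} (applied to subtrees hanging off $P$, or to a rooting that makes the path a spine) to peel off a sub-interval at a time, maintaining the invariant that the already-placed portion occupies a prefix/suffix of $G[i,j]$ with the two ``active'' attachment points at the extreme positions, so that their quarter-planes $Q^-(\phi(a))$ and $Q^+(\phi(b))$ remain empty. Concretely, I would split $T$ at a well-chosen vertex $c$ on the $a$--$b$ path so that one side contains $a$ and the other contains $b$; recurse on each side (each side now a tree with two portals --- the old portal and $c$, or a neighbor of $c$), placing them in adjacent sub-intervals $G[i,m]$ and $G[m+1,j]$, and then glue along the edge(s) at $c$ using the emptiness of the facing quarter-planes $Q^+$ (of the left piece's right portal) and $Q^-$ (of the right piece's left portal), which overlap in the ``gap'' between the two sub-intervals exactly where the connecting edge must be drawn. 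Subtrees hanging off the path but not containing a portal get embedded with a single portal via case~1. The bookkeeping --- making sure interval lengths match subtree sizes exactly, that the highest-vertex conditions line up so Lemma~\ref{lem:stars} / Lemma~\ref{lem:almost-intervals} apply, that portals land at the correct extreme $x$-positions, and that every added edge lies in a region certified empty by the inductive quarter-plane conditions --- is the crux, and is where Observation~\ref{obs:stretched}, the crossing-isomorphism machinery (Lemmas~\ref{lem:deletion} and~\ref{lem:replacement}), and the rapidly-increasing $y$-coordinates all get used in concert. The $O(\mathrm{poly})$ running time follows since each recursive call does $O(h)$ work (Lemma~\ref{lem:cut}) plus $O(1)$ interval arithmetic (Lemma~\ref{lem:almost-intervals}), over $O(h)$ recursion nodes.
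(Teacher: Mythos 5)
Your overall framework (induction on $|T|$, one-portal vs.\ two-portal cases, Lemmas~\ref{lem:stars}, \ref{lem:cut}, \ref{lem:almost-intervals}, \ref{lem:deletion}, \ref{lem:replacement} and Observation~\ref{obs:stretched} as tools) matches the paper, but the execution has genuine gaps, and the difficulty is located in the wrong place. The crux of the paper's proof is the case $\deg_T(a)=1$, which you dispatch in one line: you claim that $G[i,j]-v_k$ is crossing-isomorphic to an interval ``by Lemma~\ref{lem:almost-intervals} together with Lemma~\ref{lem:deletion}.'' That lemma only applies when $G[i,j]$ contains neither the right child of $v_k$ nor descendants of the left child of its left sibling, and these hypotheses fail in general (e.g., when the right child $v_r$ of $v_k$ lies in $G[i,j]$). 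This is exactly why the paper needs Cases~1.2.3--1.2.5: it uses Lemma~\ref{lem:cut} with a size $s$ dictated by the \emph{geometry} of the interval ($s=j-k+1$ or $s=j-r+1$, not $s\approx h/2$) to carve off a subtree whose image fits into a subinterval $G[j-m,j]$ where Lemma~\ref{lem:almost-intervals} \emph{does} apply, and it additionally needs Lemma~\ref{lem:replacement} (which your plan never really deploys) to re-route the portal to $v_i$ or $v_r$. Without this, your one-portal, degree-one case simply does not go through.

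Two further problems. In your $\deg_T(a)\ge 2$ case, you want to fix $\phi(a)=v_k$ and simultaneously embed the piece containing $a$ via the two-portal induction hypothesis; but the two-portal statement gives no control over \emph{which} vertices the portals are mapped to (only relative position and empty quarter-planes), so you cannot enforce Property~1(a) this way. The paper avoids the cut lemma here entirely: it partitions $[i,j]\setminus\{k\}$ into contiguous blocks matching the child-subtree sizes and recurses with a single portal on each, so every connecting edge is incident to the highest vertex of the relevant subinterval and exists by Lemma~\ref{lem:stars}. Similarly, in your two-portal case, splitting $T$ at a vertex of the $a$--$b$ path into two two-portal halves leaves the \emph{existence in $G$} of the gluing edge between the facing portals unjustified: empty quarter-planes prevent crossings but do not certify that the segment is an edge of $G$. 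The paper's Case~2 instead decomposes along the entire path into single-portal pieces, so each path vertex is the highest vertex of its subinterval and every path edge is guaranteed by Lemma~\ref{lem:stars}; this also makes the two-portal case the easy one, not ``the main obstacle'' as you anticipate.
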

\begin{proof}
	We proceed by induction on the number of vertices of $T$. In the base case,
	$T$ has one vertex, which must be the portal $a$, and the map $\varphi(a)=v_i$
	maps $a$ to the highest vertex of $G[i,i]$. For the induction step we assume that
	$h\geq 2$ and that the claim holds for all smaller trees.
	
	{\bf Case 1:} There is only one portal $a$.	Let $v_k$ denote the highest vertex in $G[i,j]$. We need to find an
		embedding of $T$ onto $G[i,j]$ where $\phi(a)=v_k$. Consider $T$ to be
		rooted at $a$.  We distinguish two cases depending on the degree of $a$ in
		$T$.
		
	{\bf Case 1.1:} $\deg_T(a)\geq 2$. Assume that $a$ has $t$ children $a_1,\ldots ,a_t$. Refer to
			\figurename~\ref{fig:4}.  Partition the set of integers
			$[i,j]\setminus \{k\}$ into $t$ contiguous subsets $I_1,\ldots ,I_t$ such
			that $|I_x|=|V(T(a_x))|$, for $x=1,\ldots ,t$.  Without loss of generality
			assume that $I_q$ contains $k-1$ or $k+1$, and so $I_q\cup \{k\}$ is an
			interval of integers.
			
			By induction, there is an embedding $\phi_x$ of $T(a_x)$ into $G(I_x)$ such
			that $a_x$ is mapped to the highest vertex of $G(I_x)$, for all $x\neq q$.  Similarly,
			there is an embedding $\phi_q$ of $T-\bigcup_{x\neq q} T(a_x)$ onto $G(I_q\cup \{k\})$ such that
			$a$ is mapped to the highest vertex of $G(I_q\cup \{k\})$, which is $v_k$.  Note that these
			embeddings are pairwise noncrossing since they use pairwise disjoint
			intervals.  Let $\phi:V(T)\rightarrow V(G[i,j])$ be the combination of these
			embeddings. Clearly, both Properties~1(a) and~1(b) are satisfied by $\phi$.
			
			The only edges of $T$ between distinct subtrees among $T(a_1),\dots,T(a_t)$ are of the form $aa_x$, for
			$x\neq q$. The edges $\phi(a)\phi(a_x)$ are in $G[i,j]$ and are pairwise noncrossing by
			Lemma~\ref{lem:stars}. Moreover, $\phi(a)\phi(a_x)$ does not cross $\phi(T(a_x))$, as $\phi(a_x)$ is the highest vertex of $\phi(T(a_x))$ and $\phi(a)$ is higher than $\phi(a_x)$; further, $\phi(a)\phi(a_x)$ does not cross $\phi(T(a_y))$, where $y\neq x$, by Observation~\ref{obs:stretched}. Therefore, $\phi$ is an embedding of $T$ onto $G[i,j]$, as required.\fullqed
		
		\begin{figure}[htb]
			\centering\includegraphics[scale=0.8]{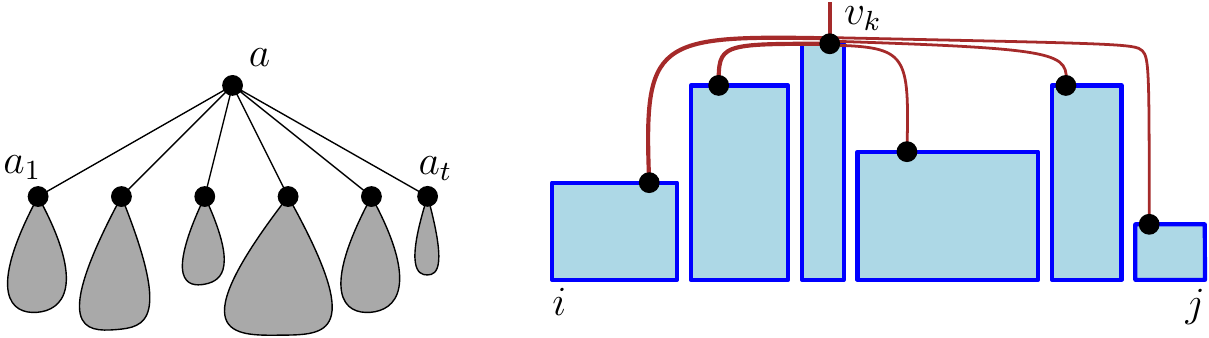}
			\caption{Illustration for Case~1.1: Tree $T$ (left) and its embedding onto $G[i,j]$ (right). \label{fig:4}}
		\end{figure}
		
	{\bf Case 1.2:} $\deg_T(a)=1$. Let $a'$ be the unique neighbor of $a$ in $T$ and let $T'=T(a')$. We need to construct an embedding $\phi$ of $T$ onto $G[i,j]$ such that $\phi(a)=v_k$ and $\phi(T')=G[i,j]-v_k$.
			
	{\bf Case 1.2.1:} $k=j$. Set $\phi(a)=v_k$ and recursively embed $T'$ onto $G[i,k-1]$ with a single portal
				$a'$, which is mapped to the highest vertex in $G[i,k-1]$ (i.e., the second highest vertex in $G[i,j]$). Clearly, both Properties~1(a) and~1(b) are satisfied. Further, $\phi$ is an embedding of $T$ onto $G[i,j]$, since $\phi(a')$ is the highest vertex of $\phi(T')$ and $\phi(a)$ is higher than $\phi(a')$, hence the edge $\phi(a)\phi(a')$ does not cross $\phi(T')$.\fullqed
		
	{\bf Case 1.2.2:} $k=i$. The discussion for this case is symmetric to the one for Case~1.2.1.\fullqed
				
	{\bf Case 1.2.3:} $i<k<j$ and the left sibling $v_\ell$ of $v_k$ exists and is in $G[i,j]$. It follows that $\ell=i$, as if $\ell>i$, then $v_{\ell-1}$, which is the parent of $v_\ell$ and $v_k$, would be a vertex in $G[i,j]$ higher than $v_k$. By construction, $v_i$ is the second highest vertex in $G[i,j]$. Recursively construct an embedding $\psi$ of $T'$ onto $G[i+1,j]$ with a single portal $a'$. By Property~1(a), we have $\psi(a')=v_k$. By Lemma~\ref{lem:replacement}, there exists an embedding $\phi$ of $T'$ onto $G[i+1,j]-v_k+v_i=G[i,j]-v_k$ in which $\phi(a')=v_i$ (hence $\phi$ satisfies Property~1(b)). Finally, set $\phi(a)=v_k$ (hence $\phi$ satisfies Property~1(a)). As in Case~1.2.1, the edge $\phi(a)\phi(a')=v_kv_i$ does not cross $\phi(T')$, hence $\phi$ is an embedding of $T$ onto $G[i,j]$.
	\fullqed
			
	{\bf Case 1.2.4:} $i<k<j$, the left sibling of $v_k$ does not exist or is not in $G[i,j]$, and the right child of $v_k$ is not in $G{[i,j]}$. Refer to \figurename~\ref{fig:no-right-child}. By construction, the left child of $v_k$ is $v_{k+1}$, which is in $G[i,j]$. Since the left sibling of $v_k$ does not exist or is not in $G[i,j]$, and since the right child of $v_k$ is not in $G[i,j]$, it follows that $v_{k+1}$ is the second highest vertex in $G[i,j]$.
				
	\begin{figure}[htb]
		\centering\includegraphics[scale=0.8]{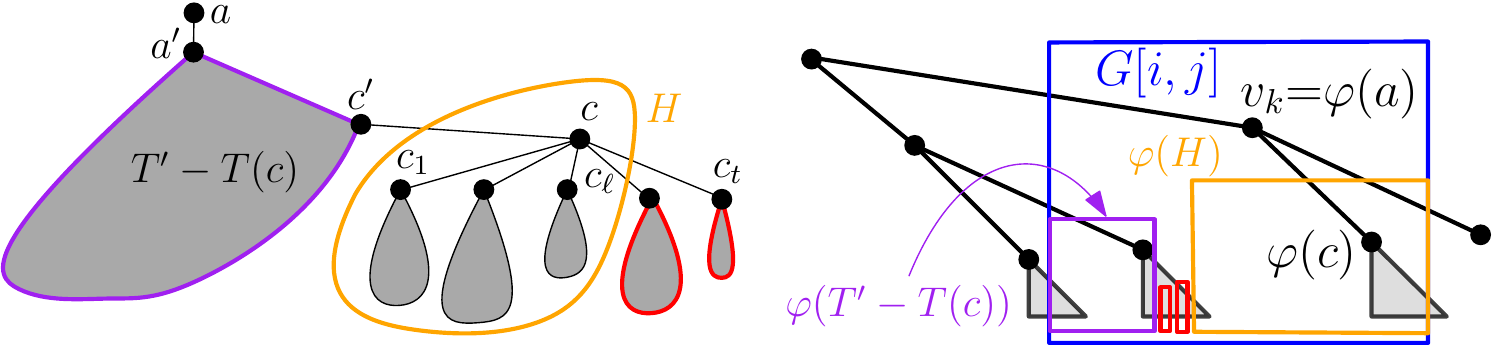}
		\caption{Illustration for Case~1.2.4.  Tree $T$ (left) and its embedding onto $G[i,j]$ (right). \label{fig:no-right-child}}
	\end{figure}

	Set $s=j-k+1$ and note that $s<h$, given that $k>i$. Then Lemma~\ref{lem:cut} yields a vertex $c$ in $T'$ such that $|V(T'(c))|\geq s$ but $|V(T'(d))|\leq s-1$ for all children $d$ of $c$. Label the children of $c$ as $c_1,\ldots, c_t$ in an arbitrary order and
				let $\ell\in [1,t]$ be the smallest index such that
				$1+\sum_{x=1}^\ell |V(T(c_i))|\geq s$.  Since $|V(T(c_\ell))|\leq s-1$, we
				have $s\leq 1+\sum_{x=1}^\ell |V(T(c_i))|\leq 2s-2$.

				Let $c'$ be the parent of $c$ in $T$.  Let $H$ denote the subtree of $T$
				induced by $c$ and the union of $V(T(c_1)),\ldots , V(T(c_\ell))$, and let
				$m=|V(H)|$.  By the above inequalities, we have $s\leq m\leq 2s-2$.  On the one hand,
				$j-k+1\leq m$ implies that the subinterval $G[j-m,j]$ contains $v_k$, and
				so $v_k$ is the highest vertex in $G[j-m,j]$.  On the other hand, the interval $G[j-m,k-1]$ contains $k-1-j+m+1=m-s+1\leq s-1$ vertices, given that $m\leq 2s-2$; however, since the right child of $v_k$ is not in $G{[i,j]}$, we know that the size of a subtree of $B$ rooted at any vertex at the
				level below $v_k$ is larger than or equal to $s-1$. It follows that $G[j-m,j]$ does not
				contain any descendants of the left child of the left sibling of $v_k$ (if
				it exists).  By Lemma~\ref{lem:almost-intervals}, $G[j-m,j]-v_k$ is
				crossing-isomorphic to an interval $G(I)$ of size $m$.
				
				Recursively embed $H$ onto $G(I)$ with one portal $c$, which is mapped to the highest vertex of $G(I)$. By Lemma~\ref{lem:deletion}, there exists an embedding $\phi$ of $H$ onto $G[j-m,j]-v_k$ such that $\phi(c)=v_{k+1}$. We complete $\phi$ into an embedding of $T$ onto $G[i,j]$ as follows. Set $\phi(a)=v_k$ (hence $\phi$ satisfies Property~1(a)). If $c$ has more than $\ell$ children, then embed the subtrees $T(c_{\ell+1}),\ldots , T(c_t)$ on consecutive subintervals to the left of
				$G[j-m,j]$, with single portals $c_{\ell+1},\ldots , c_t$, respectively.  Finally,
				by induction, we can embed $T'-T(c)$ onto the remaining subinterval of
				$G[i,j]$ with two portals $a'$ and $c'$ (hence $\phi$ satisfies Property~1(b), given that the embedding of $T'-T(c)$ satisfies Property~2(b)).
				
				The embeddings $\phi(H),\phi(T(c_{\ell+1})),\dots,\phi(T(c_t)),\phi(T'-T(c))$ are pairwise noncrossing since they use pairwise disjoint intervals. Further, the edges $\phi(c)\phi(c_{\ell+1}),\dots,\phi(c)\phi(c_t)$ belong to $G[i,j]$ by Lemma~\ref{lem:stars} and do not cross each other as they have a common endpoint; further, they do not cross $\phi(H),\phi(T(c_{\ell+1})),\dots,\phi(T(c_t))$ by Observation~\ref{obs:stretched} and do not cross $\phi(T'-T(c))$ since they use intervals disjoint from the one used by $\phi(T'-T(c))$. By analogous arguments, we can conclude that the edge $\phi(a)\phi(a')$ belongs to $G[i,j]$ and does not cross $\phi(H),\phi(T(c_{\ell+1})),\dots,\phi(T(c_t)),\phi(T'-T(c))$, and that the edge $\phi(c)\phi(c')$ belongs to $G[i,j]$ and does not cross $\phi(H),\phi(T(c_{\ell+1})),\dots,\phi(T(c_t))$. Further, the edge $\phi(c)\phi(c')$ does not cross $\phi(T'-T(c))$, since this satisfies Property~2(c) (note that $\phi(c)$ is in $Q^+(\phi(c'))$). It follows that $\phi$ is an embedding of $T$ onto $G[i,j]$.
				\fullqed
			
		{\bf Case 1.2.5:} $i<k<j$, the left sibling of $v_k$ does not exist or is not in $G[i,j]$, and the right child $v_r$ of $v_k$ is in $G{[i,j]}$. By assumption, we have $k+1<r\leq j$; further, the second highest vertex in $G[i,j]$ is $v_r$.
			
		Set $s=j-r+1$. Lemma~\ref{lem:cut} yields a vertex $c$ in $T'$ such that
				$|V(T(c))|\geq s$ but $|V(T(d))|\leq s-1$ for all children $d$ of $c$.
				Let $T(c)$ be the subtree of $T$ rooted at $c$, set $m=|V(T(c))|$,
				and label the children of $c$ by $c_1,\ldots, c_t$ in an arbitrary order.
				Let $c'$ be the parent of $c$ and denote by $T_c(c')$ the subtree of $T$
				induced by $c'$ and $V(T(c))$.
				
				\begin{figure}[htb]
					\centering\includegraphics[scale=0.8]{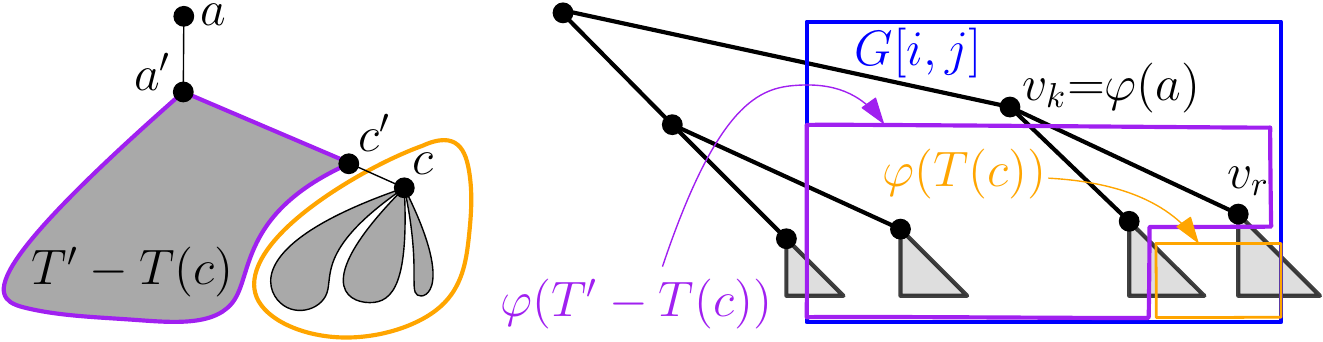}
					\caption{Illustration for Case~1.2.5.1.  Tree $T$ (left) and its embedding onto $G[i,j]$ (right).
						\label{fig:5}}
				\end{figure}
				
{\bf Case 1.2.5.1:} $m\leq j-k-1$. Then the interval $[j-m,j]$ contains $r$ but does not contain $k$, hence $v_r$ is the highest vertex in $G[j-m,m]$. Refer to \figurename~\ref{fig:5}.
                    We construct an embedding $\phi$ of $T$ onto $G[i,j]$ as follows. First, set $\phi(a)=v_k$ (hence $\phi$ satisfies Property~1(a)).
                    	
					By induction, there is an embedding $\psi_1$ of $T'-T(c)$ onto $G[i,j-m-1]$ with two portals $a'$ and $c'$. By Property~2(a) of $\psi_1$, we have that $\psi_1(a')$ is to the left of $\psi_1(c')$; further, by Properties~2(b) and~2(c) of $\psi_1$, we have that neither $Q^-(\psi_1(a'))$ nor $Q^+(\psi_1(c'))$ intersect any vertex or edge of $\psi_1(T'-T(c))$. By Lemma~\ref{lem:replacement}, there is an embedding $\phi$ of $T'-T(c)$ onto $G[i,j-m-1]-v_k+v_r$ (this is part of the embedding $\phi$ of $T$ onto $G[i,j]$). If $\psi_1(a')\neq v_k$, then $\phi$ satisfies Property~1(b), given that $\psi_1$ satisfies Property~2(b). Further, if $\psi_1(a')= v_k$, then $\phi(a')=v_r$ and the only vertex of $G[i,j]$ in the interior of $Q^-(\phi(a'))$ is $\phi(a)=v_k$, hence $\phi$ satisfies Property~1(b).
					
					Again by induction, there is an embedding $\psi_2$ of $T_c(c')$ onto $G[j-m,j]$ with a single portal $c'$. By Property~1(a) of $\psi_2$, we have $\psi_2(c')=v_r$; further, by Property~1(b) of $\psi_2$, we have that $Q^-(\psi_2(c))$ does not intersect any vertex or edge of $\psi_2(T_c(c'))$. Let $\phi(T(c))=\psi_2(T(c))$. Note that $\phi(c')$ may be different from $\psi_2(c')=v_r$. This completes the definition of $\phi(T)$.

					We argue that the edge $\phi(c)\phi(c')$ is present in $G[i,j]$. To simplify the notation, let $v_{p}=\phi(c')$ and $v_q=\phi(c)$, and note that $p<q$ or $p=r$. In the latter case, the edge $\phi(c)\phi(c')$ exists as $\psi_2(c')=v_r$ and the edge $cc'$ belongs to $T_c(c')$; hence, assume that $p<q$. On the one hand, by Property~2(c) of $\psi_1$, we have that $Q^+(\psi_1(c'))$ does not contain any vertex or edge of $\psi_1(T'-T(c))$, hence $k<p$, as otherwise $v_k$ would be in $Q^+(\psi_1(c'))$. It follows that $v_p$ is the highest vertex in $G[p,j-m-1]$. On the other hand, by Property~1(b) of $\psi_2$, we have that $Q^-(\psi_2(c))$ does not contain any vertex or edge of $\psi_2(T(c))$. It follows that $v_q$ is either the highest or the second highest vertex in $G[j-m,q]$ (as $v_r$ might belong to such an interval). Overall, one of $v_p$ or $v_q$ is the highest or the second highest vertex in $G[p,q]$.
					By Lemma~\ref{lem:stars}, $G[p,q]$ contains a star centered at $v_p$ or $v_q$,
					and so it contains the edge $v_p v_q=\phi(c')\phi(c)$, as required.
					
					We now prove that $\phi(T)$ is crossing-free. The embeddings $\phi(T(c))$ and $\phi(T'-T(c))$ are pairwise noncrossing, since they use disjoint intervals, with the exception of the edges incident to $v_r$. However, the edges of $\phi(T'-T(c))$ incident to $v_r$ do not cross $\phi(T(c))$ by Observation~\ref{obs:stretched} and the edges of $\phi(T(c))$ incident to $v_r$ do not cross $\phi(T'-T(c))$ since they use disjoint intervals. Similarly, the edge $\phi(c')\phi(c)$ crosses neither $\phi(T'-T(c))$ nor $\phi(T(c))$ by  Observation~\ref{obs:stretched} and since $Q^+(\phi(c'))$ and $Q^-(\phi(c))$ do not intersect $\phi(T'-T(c))$ and $\phi(T(c))$, respectively. Finally, the edge $\phi(a)\phi(a')$ crosses neither $\phi(T(c))$ nor $\phi(T'-T(c))$, by Observation~\ref{obs:stretched} and since $Q^-(\phi(a'))$ does not intersect any edge or vertex of $\phi(T'-T(c))$, by Property~2(b) of $\phi(T'-T(c))$.\fullqed

	{\bf Case 1.2.5.2:} $j-k-1<m$. In this case, the interval $[j-m,j]$ contains both $k$ and $r$.  Partition the set of
					integers $[j-m,j]\setminus \{k,r\}$ into $t$ contiguous subsets
					$I_1,\ldots ,I_t$ such that $|I_x|=|V(T(c_x))|$, for $x=1,\ldots ,t$.
					Without loss of generality assume that $I_q$ contains $r-1$ or $r+1$.
					
					Let $\mathcal{I}(c)$ be the collection of $t$ sets: $I_q\cup \{v_r\}$ and
					$I_x$, for $x\in [1,t]\setminus \{q\}$. The sets in $\mathcal{I}(c)$ are
					contiguous subsets of $[j-m,j]\setminus \{k\}$. Consequently, at least
					$t-1$ of them are intervals, and at most one of them, say $I_p$, is an interval minus its highest element.  Since every tree
					$T(c_i)$ has at most $s-1$ vertices, we have that $I_p$ has at most
					$s$ elements.  Since $s=j-r-1$, it follows that the size of $I_p$ is smaller than or equal to the size of $B(v_r)$, hence $I_p$ contains neither the right child of $v_k$ nor any descendant of its left sibling. By Lemma~\ref{lem:almost-intervals}, the graph $G(I_p)$ is
					crossing-isomorphic to an interval.
					Therefore, by Lemma~\ref{lem:deletion}, we can embed $T(c_p)$ onto $G(I_p)$. We also recursively embed $T(c_x)$ onto $G[I_x]$ for all
					$x\in [1,t]\setminus \{p,q\}$ and we recursively embed $T(c)-\bigcup_{x\neq q}T_x$ onto
					$G(I_q\cup\{u\})$.
					
					Embed $a$ at $v_k$. By induction, there is an embedding of $T'-T(c)$ onto $G[i,j-m-1]$ with
					portals $a'$ and $c'$. Let $\phi$ be the combination of these embeddings. The proof that $\phi$ is an embedding of $T$ onto $G[i,j]$ satisfying Properties~1(a) and~1(b) is similar to the other cases. In particular, $\phi(c)=v_r$ is the second highest vertex of $G[i,j]$, hence the edge $\phi(c)\phi(c')$ exists by Lemma~\ref{lem:stars}.
					\fullqed
	
	\begin{figure}[htb]
		\centering\includegraphics[scale=0.8]{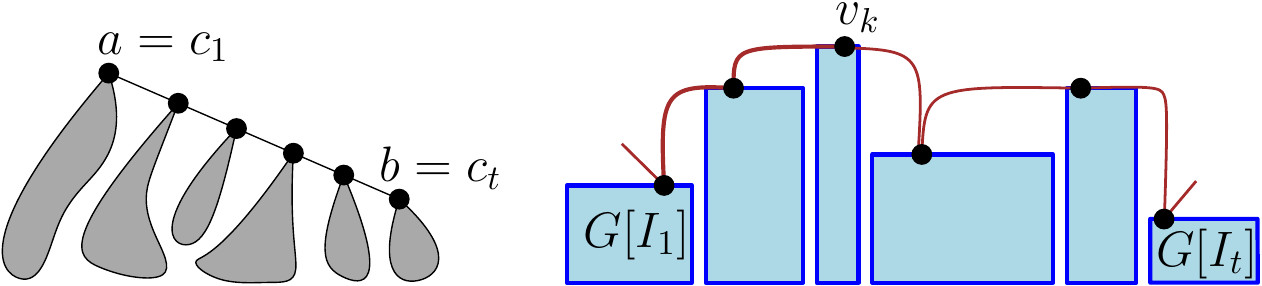}
		\caption{Illustration for Case~2.  Tree $T$ (left) and its embedding onto $G[i,j]$ (right). \label{fig:6}}
	\end{figure}
	
{\bf Case 2:} Two portals. We are given two portals $a$ and $b$; refer to \figurename~\ref{fig:6}.  Let
		$P=(a=c_1,\ldots, c_t=b)$ be the path between $a$ and $b$ in $T$, where
		$t\geq 2$. The deletion of the edges in $P$ splits $T$ into $t$
		trees rooted at $c_1,\ldots ,c_t$. Partition $[i,j]$ into $t$ subintervals
		$I_1,\ldots ,I_t$ such that $|I_x|=|V(T(c_x))|$, for $x=1,\ldots ,t$.
		
		For $x=1,\ldots ,t$,  recursively construct an embedding $\phi_x$ of $T(c_x)$ onto $G(I_x)$ with one portal $c_x$, in which $\phi_x(c_x)$ is the highest vertex in $G(I_x)$. Let $\phi$ be the combination of these embeddings.

		For any distinct $x$ and $y$ in $\{1,\dots,t\}$, we have that $\phi(T(c_x))$ and $\phi(T(c_y))$ do not cross each other, since they use disjoint intervals. Only the edges in $P$ connect vertices from distinct intervals.  For $x=1,\dots,t-1$, let $e_x$ be the edge $\phi(c_x)\phi(c_{x+1})$. Note that $e_x$ is incident to the highest vertex in $G(I_x\cup I_{x+1})$ and hence belongs to $G(I_x\cup I_{x+1})$ by
		Lemma~\ref{lem:stars}. Further, $e_x$ does not cross $\phi(T(c_y))$, with $y<x$ or $y>x+1$, as the intervals spanned by $e_x$ and $\phi(T(c_y))$ are disjoint. Analogously, $e_x$ does not cross any edge $e_y$, with $y\neq x$. Assume that $\phi(c_{x+1})$ is higher than $\phi(c_x)$, the other case is symmetric. Then $e_x$ does not cross $\phi(T(c_x))$, as $\phi(c_x)$ is the highest vertex of $\phi(T(c_x))$ and $\phi(c_{x+1})$ is higher than $\phi(c_x)$; finally, $e_x$ does not cross $\phi(T(c_{x+1}))$, by Observation~\ref{obs:stretched}. Therefore, $\phi$ is an
		embedding of $T$ onto $G[i,j]$.  It is easily checked that this embedding satisfies Properties~2(a), 2(b), and 2(c), as required.
\end{proof}

\section{Convex Geometric Graphs}
\label{sec:cx}

Every graph embedded in a \emph{convex} geometric graph is outerplanar. Clearly, a complete convex geometric graph on $n$ vertices has $O(n^2)$ edges and is universal for $n$-vertex outerplanar graphs. In the next theorem we show that this trivial upper bound is almost tight. For $h\geq 0$ and $n\geq 2h+2$, let $\mathcal{O}_h(n)$ be the family of all outerplanar graphs on $n$ vertices that consist of a spanning cycle plus $h$ pairwise disjoint chords.

\begin{theorem}\label{thm:h-chords}
For every positive integer $h$ and $n\geq 3h^2$, every convex geometric graph on $n$ vertices that is universal for $\mathcal{O}_h(n)$ has $\Omega_h(n^{2-1/h})$ edges.
\end{theorem}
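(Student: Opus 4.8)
The plan is to show that a convex geometric graph with too few edges cannot accommodate all members of $\mathcal{O}_h(n)$, by a counting argument that pits the number of distinct "chord patterns" realizable in a sparse convex graph against the number of patterns that must be realized. First I would fix the cyclic structure: in a convex geometric graph the vertices lie on a circle in some cyclic order $u_0,\dots,u_{n-1}$, and any embedded spanning cycle of an $n$-vertex graph must use exactly the $n$ "boundary" edges $u_iu_{i+1}$ (indices mod $n$) — this is the only noncrossing Hamiltonian cycle on points in convex position. Hence for $G$ to be universal for $\mathcal{O}_h(n)$, first of all all $n$ boundary edges must be present, and moreover every possible placement of $h$ pairwise disjoint chords on the cycle $C_n$ (pairwise vertex-disjoint, and also pairwise noncrossing so the result is outerplanar) must appear as $h$ actual edges of $G$ that are pairwise noncrossing as segments. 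The pairwise-noncrossing condition on the chords of the abstract graph, once the cycle is pinned to the convex boundary, translates into the chords being pairwise "nested or separated" in the cyclic order; but crucially we get to choose the embedding of the abstract cycle, i.e. which automorphism/rotation-reflection of $C_n$ maps onto the boundary order, so the combinatorial type of chords we are free to demand is quite rich.

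The core of the argument is a counting/extremal step. Let $E(G)$ have $m$ edges, so $G$ has at most $m$ chords (non-boundary edges). I would partition the circle into roughly $h$ arcs and argue via a clever averaging: for a random embedding of a carefully chosen family of graphs in $\mathcal{O}_h(n)$, the $h$ chords land in "random-looking" positions, and a sparse $G$ simply does not have enough chords to cover all the required configurations. Concretely, I expect the proof to go: partition $[0,n-1]$ into $h$ blocks of size $\approx n/h$; consider the family of outerplanar graphs obtained by picking one chord "crossing" between consecutive blocks in a controlled way so that the $h$ chords are forced to be pairwise disjoint; the number of such configurations is roughly $(n/h)^{h}$ up to constants; each edge of $G$ participates in at most a bounded-by-$n^{h-1}$-ish number of these configurations; balancing gives $m \gtrsim_h (n/h)^{h}/n^{h-1} = \Omega_h(n)$ — which is too weak, so the real argument must be subtler. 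I would instead iterate/bootstrap: use the fact that after fixing $h-1$ of the chords, the remaining freedom for the last chord is an interval of length $\Theta(n)$, and a double-counting across all $h$ "last-chord" roles shows that the chord-set of $G$ must contain, for every choice of $h-1$ "anchor" positions, almost a full interval of chords; summing the lower bounds over the $\binom{?}{h-1}$ anchor choices, with the right bookkeeping to avoid overcounting, yields $m = \Omega_h(n^{2-1/h})$. The exponent $2-1/h$ strongly suggests a Kővári–Sós–Turán / Zarankproblem-flavored bound: I would try to encode the requirement "$G$ contains chord $\{x,y\}$" as a bipartite incidence and show that the forbidden configuration is a $K_{h,h}$-type subconfiguration, so that $m=\Omega_h(n^{2-1/h})$ follows from the Kővári–Sós–Turán theorem in reverse — i.e., if $G$ had $o(n^{2-1/h})$ edges it would miss some required chord-$h$-tuple.

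The key steps in order would therefore be: (1) establish that the spanning cycle is forced onto the convex boundary, so all $n$ boundary edges are in $G$ and only the cyclic position of the $h$ chords is at our disposal; (2) characterize exactly which $h$-tuples of chords of $C_n$ give an outerplanar (hence embeddable-in-convex-position) graph — namely $h$ pairwise vertex-disjoint chords that are automatically pairwise noncrossing as segments once disjoint and drawn inside the circle, so essentially every $h$-tuple of pairwise vertex-disjoint chords works (for $n\ge 3h^2$ there is enough room); (3) set up the bipartite incidence between (chords present in $G$) and (the $\Theta(n^{h})$ required $h$-tuples), noting each $h$-tuple needs all $h$ of its chords in $G$; (4) observe that if $G$ has $m$ chords then the number of $h$-tuples it can cover is at most $\binom{m}{h} = O(m^h)$ — but we need to be careful because not every $h$-subset of $G$'s chords is vertex-disjoint, so actually the bound is tighter: at most (number of vertex-disjoint $h$-subsets of $G$'s chord set); (5) since the number of required $h$-tuples is $\Omega_h(n^h)$ (choose $h$ disjoint chords with prescribed block structure — roughly $n \cdot (n - c_1) \cdots$ giving $\Theta(n^h)$), we get $m^h = \Omega_h(n^h)$, again only $\Omega(n)$. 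So the honest argument has to exploit geometry more: the point is that $G$ is a fixed convex geometric graph, so the "noncrossing" constraint on $G$'s own chords is severe — a convex geometric graph where every pair of chords we'd ever want is present and pairwise noncrossing is extremely restrictive, and I would leverage that to get the extra $n^{1-1/h}$ factor, presumably by an inductive "peeling" on $h$: restricting to an arc, $G$ restricted there must be universal for $\mathcal{O}_{h-1}$ of the sub-cycle, plus one extra chord spanning a $\Theta(n)$ range, and a recursion $f_h(n) \ge \Omega(n) \cdot f_{h-1}^{?}$ closes to $n^{2-1/h}$.

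\medskip

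\noindent\textbf{Main obstacle.} The hard part will be getting the exponent exactly $2-1/h$ rather than something weaker like $\Omega_h(n)$ or $\Omega_h(n^{3/2})$ uniformly. A naive double-counting only sees that $G$ must contain $\Omega(n)$ chords total; the improvement to $n^{2-1/h}$ must come from showing that the required chords cannot all be "reused" across different $h$-tuples because of the noncrossing constraint inside $G$ itself — i.e. a single chord of $G$ can serve as the "$i$-th chord" in relatively few required configurations once the other $h-1$ chords are also forced to be present and mutually noncrossing in $G$. Pinning down the right recursive/inductive statement (probably: for every arc $A$ of length $\ell$ and every "boundary condition" on how chords may leave $A$, the restriction of $G$ to $A$ is universal for $\mathcal{O}_{h-1}$ of a cycle of length $\approx \ell$, so its edge count is $\Omega_{h-1}(\ell^{2-1/(h-1)})$, then summing over $\approx n^{1/h}$ arcs of length $\approx n^{1-1/h}$) is where the real work lies, together with verifying the hypothesis $n\ge 3h^2$ is exactly what guarantees the arcs are long enough to host the sub-family at each level of the recursion. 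I would budget most of the proof-writing effort on making this recursion airtight and on the base case $h=1$ (where one shows a convex geometric graph universal for all $n$-cycles-plus-one-chord needs $\Omega(n \log n)$ or at least $\omega(n)$ edges, matching $n^{2-1/1}=n$... actually $n^{2-1}=n$, so the $h=1$ bound is just the trivial $\Omega(n)$ from the spanning cycle, and the substance is entirely in $h\ge 2$).
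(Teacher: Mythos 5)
Your proposal gets the correct setup (the spanning cycle is forced onto the convex boundary, so only the placement of the $h$ chords is at issue), but it does not reach the stated bound: as you yourself note, the double counting of required chord-$h$-tuples against $\binom{m}{h}$ coverable tuples only yields $m=\Omega_h(n)$, and the two remedies you gesture at are not carried out. The K\H{o}v\'ari--S\'os--Tur\'an idea is never instantiated (there is no identified forbidden bipartite configuration), and the ``peeling'' recursion rests on the unproven claim that the restriction of a universal graph to an arc of length $\ell$ is universal for $\mathcal{O}_{h-1}$ of an $\ell$-cycle; a graph in $\mathcal{O}_{h-1}$ of a sub-cycle is not a member of $\mathcal{O}_h(n)$, and universality gives you embeddings of whole $n$-vertex graphs with no control over where the chords land, so this inheritance statement does not follow as stated. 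There is also a small but real error in your step (2): two vertex-disjoint chords of the cycle can interleave (e.g.\ $v_0v_2$ and $v_1v_3$), so disjointness does not automatically give outerplanarity; the definition of $\mathcal{O}_h(n)$ requires the chords to be noninterleaving, which is what pins down their cyclic order and is used in the actual argument.

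The missing idea, which is the heart of the paper's proof, is to stratify the chords of the host graph $C$ by their \emph{length}, i.e.\ the distance between their endpoints along the boundary cycle $\partial C$. Because every embedding sends the spanning cycle onto $\partial C$, a chord of graph-distance $\ell$ in a guest graph must be mapped to a chord in the class $E_\ell$ of length-$\ell$ chords of $C$; length is an invariant of the embedding. For each fixed $\ell\in\{2,\dots,\lfloor n/(3h)\rfloor\}$ one exhibits $\Theta_h(n^{h-1})$ pairwise nonisomorphic graphs in $\mathcal{O}_h(n)$ all of whose $h$ disjoint chords have length exactly $\ell$ (the graphs are parametrized by the $h-1$ gaps between consecutive chords, with one very long gap guaranteeing that each unlabeled graph arises from at most two labelings). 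All $h$ chords of each such graph must embed into $E_\ell$, and $C$ can host at most $\binom{|E_\ell|}{h}\leq|E_\ell|^h$ of them, so $|E_\ell|=\Omega_h(n^{1-1/h})$ for \emph{every} such $\ell$ (formally, for the $\ell$ minimizing $|E_\ell|$, hence for all). Summing over the $\Theta_h(n)$ length classes gives $\Omega_h(n^{2-1/h})$. Note how this repairs exactly the weakness you identified: fixing the common chord length costs one degree of freedom per chord (hence $n^{h-1}$ rather than $n^h$ configurations), but in exchange all $h$ chords are confined to a single class $E_\ell$, and the extra factor of $n$ is recovered by summing over the classes rather than by any recursion or extremal graph theory.
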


\begin{proof}
The claim trivially holds for $h=1$; we may assume $h\geq 2$ in the remainder of the proof. Let $C$ be a convex geometric graph on $n$ vertices that is universal for $\mathcal{O}_h(n)$, and denote by $\partial C$ its outer (spanning) cycle. The \emph{length} of a chord $uv$ of $\partial C$ is the length of a shortest path between $u$ and $v$ along $\partial C$.
For $k\geq 2$, denote by $E_k$ the set of length-$k$ chords in $C$, and let $m\in \{2,\ldots, \lfloor n/(3h)\rfloor\}$ be an integer such that $|E_m|=\min\{|E_2|,\ldots,|E_{\lfloor n/(3h)\rfloor}|\}$.

Every graph $G\in \mathcal{O}_h$ has a unique spanning cycle $H$, which is embedded onto $\partial C$.
The four endpoints of any two chords of $H$ are noninterleaving along the spanning cycle of $H$,
otherwise $G$ would not be an outerplanar graph. Hence the $h$ chords of $H$
have a well-defined cyclic order along $H$. A \emph{gap} of $G$ is a path between two consecutive chords along $H$ (in cyclic order).

Let $\mathcal{L}$ be the set of \emph{vertex-labeled} outerplanar graphs on $n$ vertices that consist of a spanning cycle $(v_0,\ldots , v_{n-1})$ plus $h$ pairwise-disjoint chords of length $m$ such that one chord is $v_0v_m$ and all $h$ chords have both vertices on the path
$P=(v_0,\ldots , v_{\lfloor n/3\rfloor+hm-1})$. Note that every graph in $\mathcal{L}$ has $h$ gaps. The length of $P$ is $\lfloor n/3\rfloor +hm-1\leq \lfloor n/3\rfloor+h\cdot \lfloor n/(3h)\rfloor-1<2\lfloor n/3\rfloor$. Consequently, the length of the gap between the last and the first chords of $P$ is more than
$n-2\lfloor n/3\rfloor=\lceil n/3\rceil$. This is the longest gap, as the combined lengths of the remaining $h-1$ gaps is at most $\lfloor n/3\rfloor$.

Let $\mathcal{U}$ denote the subset of \emph{unlabeled} graphs in $\mathcal{O}_h(n)$ that correspond to some labeled graph in $\mathcal{L}$. We give lower bounds for $|\mathcal{L}|$ and $|\mathcal{U}|$. Each graph in $\mathcal{L}$ is determined by the $h-1$ gaps between consecutive chords along $P$. The sum of these distances is an integer between $h-1$ and $(\lfloor n/3\rfloor+hm-1)-hm <\lfloor n/3\rfloor$. The number of compositions of $\lfloor n/3\rfloor$ into $h$ positive integers (i.e., $h-1$ distances and a remainder) is $\binom{\lfloor n/3\rfloor}{h-1}\in \Theta_h(n^{h-1})$. Each \emph{unlabeled} graph in $\mathcal{U}$ corresponds to at most two labeled graphs in $\mathcal{L}$, since any graph automorphism setwise fixes the unique spanning cycle as well as the longest gap. Hence, $|\mathcal{U}| \in \Theta(|\mathcal{L}|)\subseteq \Theta_h(n^{h-1})$.

Since $C$ is universal for $\mathcal{O}_h(n)$ and $\mathcal{U}\subset \mathcal{O}_h(n)$,  every graph $G$ in $\mathcal{U}$ embeds onto $C$. Since every embedding of $G$ maps the spanning cycle of $G$ onto the outer cycle $\partial C$ and the $h$ chords of $G$ into a subset of $E_m$, we have that $C$ contains at most $\binom{|E_m|}{h}\leq |E_m|^h$ graphs in $\mathcal{U}$. The combination of the lower and upper bounds for $|\mathcal{U}|$ yields $|E_m|^h \in \Omega_h(n^{h-1})$, hence $|E_m| \in \Omega_h(n^{1-1/h})$. Overall, the number of edges in $C$ is at least $\sum_{i=1}^{\lfloor n/(3h)\rfloor} |E_i|\geq \lfloor n/(3h)\rfloor \cdot |E_m|\in \Omega_h(n^{2-1/h})$.
\end{proof}

For the case $h=2$, the lower bound of Theorem~\ref{thm:h-chords} is the best possible, as shown in the following theorem.

\begin{theorem}\label{thm:twochords}
For every $n\in \N$, there exists a convex geometric graph with $n$ vertices and $O(n^{3/2})$ edges that is universal for $\mathcal{O}_2(n)$.
\end{theorem}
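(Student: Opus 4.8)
The plan is to build a convex geometric graph $C$ whose vertex set is the cycle $(v_0,\dots,v_{n-1})$ and which contains, besides $\partial C$, enough chords to realize every $n$-vertex cycle with two disjoint chords. A graph in $\mathcal{O}_2(n)$ is determined up to isomorphism by the multiset of the four arc-lengths $\{a_1,a_2,a_3,a_4\}$ (with $\sum a_i=n$) into which the two disjoint chords cut the spanning cycle; an embedding onto $C$ amounts to choosing a rotation/reflection of this pattern along $\partial C$, i.e. choosing where the first chord endpoint sits and then placing the two chords as chords of the appropriate lengths. So it suffices that $C$ contain, for a suitable ``block size'' $t=\Theta(\sqrt n)$, a set of chords that lets us realize any prescribed pair of chord-lengths $(k,\ell)$ with $k+\ell\le n$ and the two chords disjoint. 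The obstacle is that there are $\Theta(n)$ possible chord lengths and $\Theta(n)$ possible positions, which naively costs $\Theta(n^2)$ edges; we must exploit the freedom to slide the pattern.

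\textbf{Step 1 (normalization).} First I would reduce an arbitrary $G\in\mathcal{O}_2(n)$ to a canonical position. Write $t=\lceil\sqrt n\,\rceil$. For each of the two chords, its length $k$ can be written $k=qt+r$ with $0\le r<t$. The idea is: place one endpoint of a chord at a vertex whose index is a multiple of $t$ (a \emph{hub}), and absorb the residue $r$ by sliding the whole pattern. Concretely, the four arcs $a_1,a_2,a_3,a_4$ sum to $n$; I want to choose the starting offset so that, after placing the two chords, both of the ``outer'' endpoints land on hubs $v_{jt}$, while the two ``inner'' endpoints land at distance $<t$ from a hub. This is possible because we have $n$ rotational placements available and only $\Theta(\sqrt n)$ residue classes to hit; a short pigeonhole / counting argument (or an explicit formula writing each $a_i$ modulo $t$) shows a valid offset exists.

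\textbf{Step 2 (the edge set).} Then $C$ consists of: (i) the cycle $\partial C$, $n$ edges; (ii) all ``long hub chords'' $v_{jt}\,v_{j't}$ for $0\le j<j'<n/t$, which is $O((n/t)^2)=O(n)$ edges; (iii) all ``short chords'' $v_a v_b$ with $|a-b|<2t$ (cyclically), which is $O(n\cdot t)=O(n^{3/2})$ edges; and possibly (iv) all chords $v_{jt}v_b$ from a hub to any vertex within distance $t$ of another hub, again $O((n/t)\cdot t\cdot t)=O(n^{3/2})$. So $|E(C)|=O(n^{3/2})$ as required. The routine part is to verify that every chord a normalized embedding needs is present: a chord of length $k=qt+r$ between a hub $v_{jt}$ and a near-hub vertex decomposes, in $C$, as the pattern realized by first going along a long hub chord of length $qt$ (or $(q\pm1)t$) and then correcting by $<t$ within the short-chord regime — but since we have the chord itself in $C$ by construction (it is either a short chord, a hub chord, or a hub-to-near-hub chord), no decomposition is actually needed; one only checks that the two chords, placed as prescribed, are themselves edges of $C$ and are disjoint and noncrossing, the latter being automatic on a convex point set once the endpoints noninterleave.

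\textbf{Main obstacle.} The delicate point, which I expect to be the crux, is Step~1: showing a single global offset simultaneously ``rounds'' both chords to the hub grid while keeping all four endpoints in legal positions (outer endpoints on hubs, inner endpoints within distance $t$ of hubs) and keeping the two chords disjoint. With two chords there are two residues $r_1,r_2$ to kill and the arcs are coupled by $\sum a_i=n$, so one has to set up the arithmetic carefully — essentially choosing the offset $s$ so that $s\equiv -(\text{partial sums of }a_i)\pmod t$ for the right indices — and argue the constraints are mutually satisfiable, using $n\ge t^2$ to guarantee each arc is long enough to contain the slack. Once the normalization lemma is in hand, the edge count and the embedding verification are straightforward bookkeeping of the kind already carried out for Lemma~\ref{lem:stars} and Observation~\ref{obs:stretched}.
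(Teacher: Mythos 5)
There is a genuine gap, and it sits exactly where you flagged it: the normalization in Step 1 is not just delicate, it is false, and with it your edge set fails to be universal. In your graph every chord of length at least $2t$ must have an endpoint \emph{exactly} on a hub (edge groups (ii) and (iv) both require this; the short chords (iii) are irrelevant for long chords). Now take $G\in\mathcal{O}_2(n)$ whose two chords both have length $3t$ and are separated by a gap of length $1$ (the remaining arc takes up the rest of the cycle). Any embedding of $G$ into a convex geometric graph must map its spanning cycle onto the outer cycle, so the embedding is a rotation or reflection; under any such map the two endpoints of the first chord lie in one residue class $\sigma \bmod t$ and the two endpoints of the second chord lie in the class $\sigma+1 \bmod t$ (the difference of these classes equals the gap mod $t$ and is invariant under rotation and reflection). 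Requiring a hub endpoint for both chords forces $\sigma\equiv 0$ and $\sigma+1\equiv 0 \pmod t$ simultaneously, which is impossible for $t\ge 2$. This is precisely why your pigeonhole argument ("$n$ rotational placements, only $\Theta(\sqrt n)$ residues to hit") cannot work: a single offset gives one degree of freedom mod $t$, but you need to hit two residue targets whose difference is an invariant of $G$. The same obstruction kills the stronger requirement that both "outer" endpoints land on hubs, since their distance along the cycle is fixed by $G$ and need not be $\equiv 0\pmod t$.

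The paper's construction avoids this by making the set of star centers a \emph{difference basis} rather than an arithmetic progression: it places full stars at $S=\{0,\ldots,\lfloor\sqrt n\rfloor-1\}\cup\{i\lfloor\sqrt n\rfloor : 1\le i\le \lfloor\sqrt n\rfloor\}$, so that every distance $d\le \lfloor n/2\rfloor$ is realized as $b-a$ with $a,b\in S$. Since the stars are complete, the lengths of the two chords impose no constraint at all; the only quantity the rotation must control is the single distance $d$ between the two nearest chord endpoints, and one degree of freedom suffices for one target. If you want to salvage your hub idea, the fix is exactly this: augment the hubs $\{0,t,2t,\ldots\}$ with a contiguous block $\{0,1,\ldots,t-1\}$ of additional star centers (still $O(\sqrt n)$ centers, hence $O(n^{3/2})$ edges), and rotate so that the two chord endpoints facing each other across one gap land on two centers at the prescribed distance; the blocks (ii)--(iv) of your construction then become unnecessary.
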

\begin{proof}
We construct a convex geometric graph $C$ and then show that it is universal for $\mathcal{O}_2(n)$.  The vertices
	$v_0,\ldots,v_{n-1}$ of $C$ form a regular convex $n$-gon, and the edges of
	this spanning cycle are in $C$. Denote
	\[
	S=\left\{0,\ldots, \left\lfloor\sqrt{n}\right\rfloor-1\right\}\cup\left\{i
	\left\lfloor\sqrt{n}\right\rfloor\colon 1\le i\le \lfloor\sqrt{n}\rfloor
	\right\}
	\]
	and add (the edges of) a star centered at $v_s$, for every $s\in S$, to
	$C$. Clearly, $C$ contains $O(n^{3/2})$ edges. Moreover, for every
	$d\in\{1,\ldots,\lfloor n/2\rfloor\}$ there exist $a,b\in S$ so that $b-a=d$.
	For any $G\in\mathcal{O}_2(n)$, let $a,b\in S$ so that the distance along the outer cycle between
	the two closest vertices of the two chords of $G$ is
	$b-a$. As $C$ contains stars centered at both $v_a$ and $v_b$, the graph $G$
    embeds onto $C$.
\end{proof}

While we can prove a near-quadratic lower bound for the number of edges of an $n$-vertex convex geometric graph that is universal for $n$-vertex outerplanar graphs, for $n$-vertex trees we only have an $\Omega(n \log n)$ lower bound, which is valid even in the abstract setting and for caterpillars~\cite[Theorem~1]{cg-gcast-78}, where a \emph{caterpillar} is a tree such that the removal of its leaves results in a path, called \emph{spine}.

We next prove that, for caterpillars, the above lower bound is tight. Namely, we construct a convex geometric graph $G$ with $n$ vertices and $O(n \log n)$ edges that is universal for caterpillars with $n$ vertices.
Our construction of $G$ relies on a simple recursive construction of integer sequences.
Specifically, we define a sequence $\pi_n$ of $n$ integers.
Let $\pi_1$ be a one-term sequence $\pi_1=(1)$. For every integer $m$ of the form $m=2^h-1$, where $h\geq 2$,
let $\pi_m = \pi_{(m-1)/2}(m)\pi_{(m-1)/2}$. For example, $\pi_{15}=(1,3,1,7,1,3,1,15,1,3,1,7,1,3,1)$. Now for any $n\in \N$, the sequence $\pi_n$ consists of the first $n$ integers in $\pi_m$, where $m\geq n$ and $m=2^h-1$, for some integer $h\geq 1$. For example, $\pi_{10}=(1,3,1,7,1,3,1,15,1,3)$.
For $i=1,\dots,n$, let $\pi_n(i)$ be the $i$th term of $\pi_n$.

\begin{property}[\cite{fpr-lr-20}] \label{pr:sequence-max}
For every $n\in \N$, $\pi_n$ is a sequence of positive integers such that for every $x$ with $1\leq x \leq n$,
	the maximum of any $x$ consecutive elements in $\pi_n$ is at least $x$.
\end{property}

The graph $G$ has vertices $v_1,\dots,v_n$, placed in counterclockwise order along a circle $c$. Further, for $i=1,\dots,n$, we have that $G$ contains edges connecting $v_i$ to the $\pi_n(i)$ vertices preceding $v_i$ and to the $\pi_n(i)$ vertices following $v_i$ along $c$. We are now ready to prove the following.

\begin{theorem}\label{thm:caterpilar}
For every $n\in \N$, there exists a convex geometric graph $G$ with $n$ vertices and  $O(n\log n)$ edges that is universal for $n$-vertex caterpillars.
\end{theorem}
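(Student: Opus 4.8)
The plan is to show that an arbitrary $n$-vertex caterpillar $T$ embeds onto the convex graph $G$ defined above. Write the spine of $T$ as $P = (p_1, \dots, p_k)$, and to each spine vertex $p_t$ attach its $\ell_t \ge 0$ leaves. Since $T$ has $n$ vertices, $\sum_{t=1}^k (1 + \ell_t) = n$. The natural \emph{caterpillar order} on $V(T)$ lists the vertices as: the leaves of $p_1$ (in any order), then $p_1$, then $p_2$, then the leaves of $p_2$, then $p_3$, then the leaves of $p_3$, and so on, alternating so that each spine vertex $p_{t}$ has its leaf-block placed on the side of $p_t$ away from $p_{t-1}$ — equivalently, the vertices are listed so that each spine vertex appears between its predecessor on the spine and its own block of leaves, and consecutive spine vertices are adjacent in the listing when the intermediate leaf-block is empty. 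The key point is that in this linear order, every edge of $T$ connects two vertices that are \emph{consecutive} (a spine edge $p_tp_{t+1}$ when $\ell$'s intervene appropriately) or connects a spine vertex $p_t$ to a leaf lying in a contiguous run immediately to one side of $p_t$. More precisely, if $p_t$ occupies position $i$ in the caterpillar order, then all edges incident to $p_t$ go to vertices within distance $\max(\ell_t, \text{(gap to }p_{t-1}), \text{(gap to }p_{t+1})\text{)}$ — but since the spine neighbors are themselves adjacent in the order up to the intervening leaf-block of the \emph{other} endpoint, the span of edges at $p_t$ is controlled by $\ell_t + \ell_{t\pm 1} + O(1)$, which is trivially at most $n$.

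Next I would place the vertices of $T$ onto $v_1, \dots, v_n$ of $G$ following the caterpillar order, but with the crucial refinement of \emph{choosing the starting offset}. Here is where Property~\ref{pr:sequence-max} enters. I want to guarantee that each spine vertex $p_t$, placed at some $v_i$, lands on a vertex $v_i$ with $\pi_n(i)$ large enough to reach all of $p_t$'s neighbors along the circle. The total edge-span needed at $p_t$ is at most (size of the block of leaves of $p_{t-1}$) $+ 1 +$ (size of the block of leaves of $p_t$) on one side and symmetric on the other — call this quantity $x_t$, and note $x_t \le n$. By Property~\ref{pr:sequence-max}, any window of $x_t$ consecutive positions in $\pi_n$ contains an index with $\pi_n$-value $\ge x_t$. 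The strategy is therefore to lay out the caterpillar order \emph{greedily from left to right}, and whenever we are about to place a spine vertex $p_t$, use the freedom in ordering the leaf-block of $p_{t-1}$ — or rather, observe that $p_t$ must be placed somewhere in a specific window of length (size of $p_{t-1}$'s leaf block $+1$); within that window there is a position whose $\pi_n$-value dominates the required span, and we place $p_t$ there, dumping $p_{t-1}$'s remaining leaves into the leftover slots of the window (they need only reach $p_{t-1}$, which is at distance at most $x_{t-1} \le $ that leaf-block size, again covered by $\pi_{t-1}$'s value — so we should apply the same windowing argument one step earlier). Iterating, each spine vertex sits on a high-degree vertex of $G$ and all of its incident edges are present.

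Finally I would check that the resulting drawing is crossing-free, which for a convex geometric graph just means: no two edges have interleaving endpoints along the circle. Every edge of $T$ is incident to a spine vertex (caterpillars have no edge between two leaves), so it suffices to show the edges at each $p_t$ don't interleave with edges at $p_{t'}$ for $t \ne t'$. Because the images of $p_t$'s entire closed neighborhood occupy a contiguous arc $A_t$ of the circle, and these arcs $A_1, A_2, \dots$ are ordered consistently with the spine and overlap only in the single shared vertex $v$ where consecutive spine vertices or a spine vertex and its neighbor's block meet, the only possible interleaving is between consecutive arcs $A_t, A_{t+1}$ sharing the endpoint $p_tp_{t+1}$; and edges within two arcs that share exactly one endpoint and are otherwise on opposite sides of that endpoint cannot interleave. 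The main obstacle I anticipate is precisely the bookkeeping in the middle step: making the windowing argument rigorous requires being careful that the window in which $p_t$ may be placed has length exactly (number of leaves of $p_{t-1}$) $+ 1$, that Property~\ref{pr:sequence-max} is applied with $x$ equal to that length (which is $\le n$, so the hypothesis $1 \le x \le n$ holds), and that after committing $p_t$'s position the leftover leaves genuinely fit — i.e., that the recursion closes. I would package this as an induction on $t$ with the invariant "positions $1, \dots, (\text{current prefix length})$ have been assigned to the first several spine vertices and all leaves of all but the last placed spine vertex, crossing-free, with every placed spine vertex on a vertex of sufficient $\pi_n$-value."
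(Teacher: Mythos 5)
Your overall strategy is the right one (same universal graph, same key tool, namely Property~\ref{pr:sequence-max}, and the same idea of putting each spine vertex on a locally maximum-$\pi$ position inside a contiguous block), but the specific greedy you describe has a genuine gap, and it is exactly at the step you flag as ``bookkeeping''. You place $p_t$ inside a window of length $\ell_{t-1}+1$ (the slots reserved for the leaves of $p_{t-1}$ plus one) and assert that this window contains a position whose $\pi_n$-value ``dominates the required span'' at $p_t$. Property~\ref{pr:sequence-max} applied to that window only yields a position of value at least $\ell_{t-1}+1$, whereas the span you yourself computed for $p_t$ is of order $\ell_{t-1}+\ell_t+1$; nothing forces a value that large to appear in a window of length $\ell_{t-1}+1$. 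Worse, the leaf edges of $p_{t-1}$ are not incident to $p_t$ at all, so the high-$\pi$ position you reserve for $p_t$ does not help them: if $\ell_{t-2}=0$, then $p_{t-1}$ sits in a window of length $1$ and may land on a vertex with $\pi_n$-value $1$, while its $\ell_{t-1}$ leaves are dumped into slots at distance up to $\ell_{t-1}+1$ from it; neither endpoint of such a leaf edge is guaranteed the needed range, so the edge may simply be absent from $G$. So the recursion does not close as stated.

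The fix is to change the block decomposition: group each spine vertex with its \emph{own} leaves into a block of $n_i=\ell_i+1$ consecutive circle vertices, and place $u_i$ at the maximum-degree (maximum-$\pi$) vertex of its block. Then Property~\ref{pr:sequence-max} with $x=n_i$ gives $\pi$-value at least $n_i$ at that position, which covers every edge from $u_i$ to its leaves, wherever they sit in the block. For the spine edge $u_iu_{i+1}$, note that every vertex on the circular stretch between the images of $u_i$ and $u_{i+1}$ lies in the union of the two blocks, so the maximum $\pi$-value on that stretch---which by Property~\ref{pr:sequence-max} is at least the length of the stretch---is attained at one of the two spine images, and hence the edge is present in $G$. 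This is the paper's argument; your planarity discussion (disjoint arcs per block, all edges incident to spine vertices, spine images in circular order) then goes through essentially as you wrote it.
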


\begin{proof}
	First, the number of edges of $G$ is at most twice the sum of the integers in $\pi_n$; the latter is less than or equal to the sum of the integers in $\pi_m$, where $m<2n$ and $m=2^h-1$, for some integer $h\geq 1$. Further, $\pi_m$ is easily shown to be equal to $(h-1) \cdot 2^h +1\in O(n \log n)$.
	
	Let $C$ be a caterpillar with $n$ vertices and let $(u_1,u_2,\ldots,u_s)$ be the spine of $C$, for some $s\geq 1$. For $i=1,\ldots,s$, let $S_i$ be the star composed of $u_i$ and its adjacent leaves; let $n_i$ be the number of vertices of $S_i$. Let $m_1=0$; and for $i=2,\ldots,s$, let $m_i=\sum_{j=1}^{i-1} n_j$. For $i=1,\ldots,s$, we embed $S_i$ onto the subgraph $G_i$ of $G$ induced by the vertices $v_{m_i+1},v_{m_i+2},\dots,v_{m_{i}+n_i}$: This is done by embedding $u_i$ at the vertex $v_{x_i}$ of $G_i$ whose degree (in $G$) is maximum, and by embedding the leaves of $S_i$ at the remaining vertices of~$G_i$.
	
	By Property~\ref{pr:sequence-max}, we have that $v_{x_i}$ is adjacent in $G$ to the $n_i$ vertices preceding it and the $n_i$ vertices following it along $c$ (and possibly to more vertices). Hence, $v_{x_i}$ is adjacent to all other vertices of $G_i$, which proves that the above embedding of $S_i$ onto $G_i$ is valid. We now prove that the edge $v_{x_i}v_{x_{i+1}}$ belongs to $G$ for all $i=1,\ldots,s-1$. Again by Property~\ref{pr:sequence-max}, the vertex between $v_{x_i}$ and $v_{x_{i+1}}$ with the highest degree is adjacent to the $n_i+n_{i+1}$ vertices preceding and $n_i+n_{i+1}$ vertices following it along $c$ (and possibly to more vertices). Hence, the vertex between $v_{x_i}$ and $v_{x_{i+1}}$ with the highest degree is adjacent to all other vertices in $\{v_{m_i+1},v_{m_i+2},\ldots,v_{m_{i+1}+n_{i+1}}\}$, and in particular to the vertex between $v_{x_i}$ and $v_{x_{i+1}}$ with the lowest degree. The proof is concluded by observing that the edges of the spine $(u_1,u_2,\ldots,u_s)$ do not cross each other, since the vertices $u_1, u_2,\ldots, u_s$ appear in this order along $c$.
\end{proof}

\section{Conclusions and Open Problems}

In this paper we introduced and studied the problem of constructing geometric graphs with few vertices and edges that are universal for families of planar graphs. Our research raises several challenging problems.

\paragraph{Universal geometric graphs.}
First, what is the minimum number of edges in an $n$-vertex convex geometric graph that is universal for $n$-vertex trees?
We proved that the answer is in $O(n \log n)$ if the convexity requirement is dropped, or if caterpillars, rather than trees, are considered, while the answer is close to $\Omega(n^2)$ if outerplanar graphs, rather than trees, are considered.

Second, what is the minimum number of edges in a geometric graph that is universal for all $n$-vertex planar graphs?
For abstract graphs, Babai~et~al.~\cite{bcegs-gcasg-82} constructed a universal graph with $O(n^{3/2})$ edges based on separators. Can such a construction be adapted to a geometric setting? The current best lower bound is $\Omega(n\log n)$, same as for trees~\cite{cg-ugst-83}, while the best upper bound is only $O(n^4)$.

Third, for a constant $\Delta\in\N$, what is the minimum number of edges in a geometric graph that is universal for all $n$-vertex planar graphs of maximum degree $\Delta$?

\paragraph{Plane graphs.}
The notion of universality can be further extended to \emph{plane graphs}, that is, planar graphs with given rotation. The  \emph{rotation} of a graph embedded in the plane (or in an orientable surface) is the counterclockwise orders of incident edges at all vertices. For a class $\mathcal H$ of plane graphs, a geometric graph is \emph{universal for $\mathcal H$} if it contains an embedding of every graph in $\mathcal{H}$ with the given rotation. Our upper bounds do not extend to this setting. In particular, we do not know what is the minimum number of edges (i) in a geometric graph that is universal for all $n$-vertex \emph{plane} trees, and (ii) in a convex geometric graph that is universal for all $n$-vertex \emph{plane} caterpillars.

\paragraph{Topological (multi-)graphs.}
Finally, the problems considered in this paper can be posed for \emph{topological} (multi-)graphs, as well, in which edges are represented by Jordan arcs. Within this setting we observe a sub-quartic upper bound for the number of edges of a topological multi-graph that is universal for all $n$-vertex planar graphs.

\begin{theorem}\label{thm:multi:top}
For every $n\in \N$, there exists a topological multigraph with $n$ vertices and $O(n^3)$ edges that contains a planar drawing of every $n$-vertex planar graph.
\end{theorem}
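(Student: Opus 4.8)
The plan is to build one "universal canvas" by overlaying enough combinatorial structure that any planar graph can be drawn inside it using Jordan arcs. The starting point is the classical fact (de~Fraysseix, Pach, Pollack / Schnyder) that every $n$-vertex planar graph has a straight-line embedding on the $(2n-4)\times(n-2)$ section of the integer grid; so it suffices to realize, as a topological multigraph on $n$ host vertices, a structure into which every such grid drawing embeds. The key difference from the geometric setting is that we are allowed multigraphs and Jordan-arc edges, so we have enormous freedom: we only need the host vertices to be placeable where the drawn vertices go, and then enough parallel arcs to route each segment. Concretely, I would take the host vertex set to be a set $P$ of $n$ points in convex position (or even on a line, perturbed), label them $p_1,\dots,p_n$, and for every ordered pair $(i,j)$ install a bounded number of parallel Jordan arcs from $p_i$ to $p_j$; the whole difficulty is bounding this number by $O(n)$ per pair while still guaranteeing a crossing-free routing exists for every target drawing.

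First I would fix the target: a planar graph $H$ on $n$ vertices together with its grid embedding $\Gamma$, which assigns each vertex of $H$ a grid point in $[0,2n]\times[0,n]$ and each edge a segment. I want an injection $\phi$ from $V(H)$ to $P$ together with a choice, for each edge $uv$ of $H$, of one of the parallel arcs between $\phi(u)$ and $\phi(v)$, so that the chosen arcs are pairwise noncrossing and realize the rotation system of $\Gamma$ (actually we only need planarity here, not rotation, since the statement asks only for "a planar drawing of every $n$-vertex planar graph"). The natural approach is: choose $\phi$ to be any fixed bijection independent of $H$ — say $\phi$ sends the $k$th vertex of $H$ in the $y$-then-$x$ lexicographic order of $\Gamma$ to $p_k$. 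Then the order in which the images $\phi(u)$ appear around the convex hull mimics a sweep order of $\Gamma$, and the segments of $\Gamma$, being noncrossing, can be rerouted as noncrossing arcs among the convex points. The main step is to prove a rerouting lemma: given $n$ points on a circle and a planar graph drawn with straight segments in a half-plane "above" the circle whose vertices lie on the circle in a prescribed order, one can redraw all edges as interior-disjoint Jordan arcs inside the disk — this is essentially the statement that a planar graph with a fixed outer face and fixed cyclic order of the outer vertices has a planar drawing, which follows from F\'ary/Tutte-type results or directly by induction peeling off the outer face.

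The counting then goes as follows: for each unordered pair $\{p_i,p_j\}$, how many distinct arcs do we actually need to preserve? Two edges of $H$ can be mapped to the same pair of host vertices only if $H$ has parallel edges — but $H$ is simple, so in a single target drawing each host pair is used at most once, meaning one arc per pair would suffice for a fixed target. The reason we need $O(n)$ arcs per pair (hence $O(n^3)$ total) is that the required arc between $p_i$ and $p_j$ depends on the target drawing: different planar graphs force the $p_i$–$p_j$ edge to "go around" different subsets of the other points, and there are exponentially many subsets but, crucially, only a bounded-by-$O(n)$ number of homotopy classes that ever arise, because in any planar drawing on the convex point set the arc for $\{p_i,p_j\}$ separates the remaining $n-2$ points into two contiguous arcs of the circle, of which there are exactly $n-1$ choices. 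So I would install, for each pair $\{p_i,p_j\}$ and each way of splitting the other points into two circular arcs, one Jordan arc of the corresponding homotopy type; that is at most $n-1$ arcs per pair, $O(n^3)$ in total, and the routing of any target drawing uses, for each of its edges, the unique installed arc matching how that edge sits relative to the (already-placed) other vertices. The verification that these chosen arcs are pairwise noncrossing is exactly the rerouting lemma above applied homotopy-class by homotopy-class.

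The step I expect to be the main obstacle is making the homotopy-counting argument fully rigorous: one must check that, across \emph{all} planar graphs and all their grid drawings simultaneously, the pushed-forward edges between a fixed host pair really do fall into only $O(n)$ isotopy classes rel the host vertex set, and that for each class a single preinstalled arc can serve every drawing that needs that class without creating crossings with the arcs chosen for other edges. This requires care because "which contiguous arc of the remaining points lies on a given side" is determined by the target drawing, and one has to argue that once $\phi$ is fixed, the set of host pairs actually used, together with their sides, always forms a planar pattern realizable by interior-disjoint arcs — which is where invoking Tutte's theorem (a $3$-connected planar graph has an embedding with any chosen face as outer face; general case by adding edges or by induction on the block–cut tree) closes the gap. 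Bookkeeping the $O(n^3)$ bound and confirming it is indeed sub-quartic is then routine.
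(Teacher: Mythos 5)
Your overall strategy---reduce every edge to one of $O(n)$ canonical ``types'' per vertex pair and preinstall one arc per type---is in spirit the same idea that drives the actual proof, but the two steps you yourself flag as delicate are exactly where the argument breaks, and the tools you invoke do not repair it. First, your ``rerouting lemma'' is false as stated: if all $n$ vertices lie on a circle in a prescribed cyclic order and all edges must be interior-disjoint arcs \emph{inside the disk}, then the graph must be outerplanar with a compatible cyclic order---already $K_4$ cannot be drawn this way. F\'ary/Tutte-type statements only let you prescribe the vertices of \emph{one face} on the outer boundary, not all $n$ vertices, so they cannot close this gap, and hence the claim that the grid drawing $\Gamma$ ``can be rerouted as noncrossing arcs among the convex points'' is unjustified. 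Second, the finiteness claim---that for your fixed bijection $\phi$ (lexicographic order of the grid drawing) every edge of every planar graph can always be realized in one of only $n-1$ ``contiguous-split'' isotopy classes relative to the point set---is precisely the hard content of the theorem, and you give no argument for it. A single Jordan arc does not separate the plane, so the classification itself still needs a definition (e.g.\ ``at most one excursion outside the disk, bypassing a block of consecutive points adjacent to an endpoint''), and with an arbitrary prescribed convex order there is no reason all edges can \emph{simultaneously} be drawn with so few excursions; the known difficulty of drawing planar graphs at prescribed vertex locations (Pach--Wenger, where edges are forced to have linear complexity) is a warning that an unstructured choice of $\phi$ will not do. Even granting the classification, you would still have to verify that fixed representatives, installed once and for all, are pairwise noncrossing for every target graph; you defer this to the (false) rerouting lemma.

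For comparison, the paper lets the vertex order depend on the graph through a known structural theorem: every planar graph admits a $2$-page \emph{monotone topological book embedding} in which each edge either lies in one page or crosses the spine exactly once. Taking the spine as the host vertex line, each edge has a combinatorial type given by its endpoint pair, the page at its left endpoint, and the spine segment it crosses, i.e.\ $\binom{n}{2}\cdot 2\cdot n=\Theta(n^3)$ types, and one canonical arc per type yields the universal topological multigraph. To salvage your convex-position variant you would either have to invoke this (or an equivalent) theorem to choose the cyclic order per graph---at which point your $O(n)$ classes per pair become exactly the ``crosses the spine at most once'' types---or prove such a statement from scratch, and then still check the simultaneous noncrossing property of the canonical representatives. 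As written, the proposal has a genuine gap.
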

\begin{proof}
	Every planar graph admits a $2$-page monotone topological book
	embedding~\cite{ddlw-ccdpg-05}. In such a drawing, every edge is either drawn
	on one page only, or it is drawn so that it crosses the spine exactly once. As
	there are $\binom{n}{2}$ possible pairs of endpoints, two choices for the page
	incident to the left endpoint, and $n$ possible segments of the spine to
	cross, we have $n^2(n-1)=\Theta(n^3)$ edges to draw. A drawing that
	encompasses all those edges is universal for planar graphs on $n$ vertices.
\end{proof}

A planar graph is \emph{subhamiltonian} if it is a subgraph of a Hamiltonian planar graph. In particular, planar graphs of degree at most four and planar graphs that do not contain a separating triangle are subhamiltonian.

\begin{theorem}\label{thm:multi:top:H}
For every $n\in \N$, there exists a topological multigraph with $n$ vertices and
	$O(n^2)$ edges that contains a planar drawing of every
	$n$-vertex subhamiltonian planar graph.
\end{theorem}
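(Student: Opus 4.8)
The plan is to adapt the book-embedding argument used in the proof of Theorem~\ref{thm:multi:top}, but to replace the $2$-page \emph{monotone topological} book embedding of general planar graphs by an ordinary $2$-page book embedding, which exists for every subhamiltonian planar graph and in which no edge crosses the spine. This removes the factor of $n$ coming from possible spine crossings, leaving only $O(n^2)$ edges.

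The first step is to record the classical fact that a planar graph is subhamiltonian if and only if it admits a \emph{$2$-page book embedding}: a drawing in which all $n$ vertices lie on a line $\ell$ in some order $w_0,\dots,w_{n-1}$, and every edge is an arc meeting $\ell$ only at its endpoints and otherwise contained in one of the two open half-planes bounded by $\ell$, with no two arcs in the same half-plane crossing. For the direction we need, if $H$ is a subgraph of a Hamiltonian planar graph $H'$, fix a plane embedding of $H'$ and a Hamiltonian cycle $Z$; the cycle $Z$ splits the remaining edges of $H'$ into those drawn inside $Z$ and those drawn outside, each being a family of pairwise non-crossing chords; placing the vertices on $\ell$ in the cyclic order along $Z$, routing the inside edges above $\ell$, the outside edges below $\ell$, and the edges of $Z$ as tiny arcs just above $\ell$ (they span only consecutive vertices or form the outermost chord, so they interleave with nothing) yields a $2$-page book embedding of $H'$, hence of its subgraph $H$. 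Two arcs in the same half-plane cross exactly when their endpoint pairs interleave along $\ell$, so within each page the endpoint pairs are pairwise non-interleaving.

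Next I construct the universal topological multigraph $M$: put $n$ vertices $p_0,\dots,p_{n-1}$ on a horizontal line $\ell$ in this order, and for every unordered pair $\{i,j\}$ with $i\neq j$ add two parallel edges between $p_i$ and $p_j$, one drawn as the upper semicircle on $p_i,p_j$ and one as the lower semicircle. Each semicircle meets $\ell$ only at its two endpoints, so $M$ is a topological multigraph, and it has $2\binom{n}{2}=n(n-1)\in O(n^2)$ edges. Finally, given an $n$-vertex subhamiltonian planar graph $H$, take a $2$-page book embedding with spine order $w_0,\dots,w_{n-1}$ and set $\phi(w_k)=p_k$; send each edge $w_aw_b$ drawn on the top page to the upper semicircle on $p_a,p_b$, and each edge on the bottom page to the lower semicircle. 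Two edges on the same page go to two upper (resp.\ lower) semicircles with non-interleaving endpoint pairs, hence to non-crossing arcs; two edges on different pages go to arcs lying in disjoint open half-planes; and no arc passes through a vertex. Thus the image of $\phi$ is a planar drawing of $H$ contained in $M$, as required.

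I do not expect a genuine obstacle here: the only points requiring care are the equivalence between subhamiltonicity and $2$-page book embeddability (standard, with the short self-contained argument above) and the elementary observation that, for arcs with endpoints on a line and interiors in one half-plane, ``non-crossing'' coincides with ``non-interleaving endpoints'' — which is precisely the property that makes both the $2$-page book embedding of $H$ and the selection of arcs inside $M$ work.
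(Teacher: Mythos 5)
Your proposal is correct and follows essentially the same route as the paper: take a $2$-page book embedding of the subhamiltonian graph and embed it into the union of two complete ``one-page'' drawings (upper and lower arcs over a common spine), giving $n(n-1)\in O(n^2)$ edges. The only difference is that you supply a short self-contained proof that subhamiltonian graphs admit $2$-page book embeddings, whereas the paper simply cites Bernhart and Kainen for this fact.
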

\begin{proof}
	Every subhamiltonian planar graph admits a $2$-page book
	embedding~\cite{bk-btg-79}. On each page, draw the complete graph using
	$\binom{n}{2}$ circular arcs, and let $D$ denote the union of these two
	drawings. Clearly, $G$ contains $n(n-1)=\Theta(n^2)$ edges and is universal
	for subhamiltonian planar graphs on $n$ vertices.
\end{proof}

\bibliographystyle{plain}
\bibliography{universal}

\begin{thebibliography}{10}

\bibitem{ac-ougde-08}
Noga Alon and Michael~R. Capalbo.
\newblock Optimal universal graphs with deterministic embedding.
\newblock In {\em Proc. 19th {ACM-SIAM} Sympos. Discrete Algorithms ({SODA})},
  pages 373--378, 2008.

\bibitem{AlonCKRRS00}
Noga Alon, Michael~R. Capalbo, Yoshiharu Kohayakawa, Vojtech R{\"{o}}dl,
  Andrzej Rucinski, and Endre Szemer{\'{e}}di.
\newblock Universality and tolerance.
\newblock In {\em Proc. 41st IEEE Sympos. Foundations of Computer Science
  ({FOCS})}, pages 14--21, 2000.

\bibitem{an-oiugb-19}
Noga Alon and Rajko Nenadov.
\newblock Optimal induced universal graphs for bounded-degree graphs.
\newblock {\em Math. Proc. Cambridge Philos. Soc.}, 166(1):61--74, 2019.

\bibitem{bcegs-gcasg-82}
L{\'a}szl{\'o} Babai, Fan R.~K. Chung, Paul Erd{\H{o}}s, Ronald~L. Graham, and
  Joel~H. Spencer.
\newblock On graphs which contain all sparse graphs.
\newblock {\em Annals Discrete Math.}, 12:21--26, 1982.

\bibitem{BannisterCDE14}
Michael~J. Bannister, Zhanpeng Cheng, William~E. Devanny, and David Eppstein.
\newblock Superpatterns and universal point sets.
\newblock {\em J. Graph Algorithms Appl.}, 18(2):177--209, 2014.

\bibitem{bk-btg-79}
Frank Bernhart and Paul~C. Kainen.
\newblock The book thickness of a graph.
\newblock {\em J. Combin. Theory Ser. B}, 27:320--331, 1979.

\bibitem{bclr-ugbtpg-89}
Sandeep~N. Bhatt, Fan R.~K. Chung, Frank~Thomson Leighton, and Arnold~L.
  Rosenberg.
\newblock Universal graphs for bounded-degree trees and planar graphs.
\newblock {\em {SIAM} J. Discrete Math.}, 2(2):145--155, 1989.

\bibitem{BonamyGP20}
Marthe Bonamy, Cyril Gavoille, and Michal Pilipczuk.
\newblock Shorter labeling schemes for planar graphs.
\newblock In {\em Proc. 31st {ACM-SIAM} Sympos. Discrete Algorithms}, pages
  446--462. {SIAM}, 2020.

\bibitem{Bose02}
Prosenjit Bose.
\newblock On embedding an outer-planar graph in a point set.
\newblock {\em Comput. Geom.}, 23(3):303--312, 2002.

\bibitem{BOTTCHER2010}
Julia B{\"o}ttcher, Klaas~P. Pruessmann, Anusch Taraz, and Andreas W{\"u}rfl.
\newblock Bandwidth, expansion, treewidth, separators and universality for
  bounded-degree graphs.
\newblock {\em European Journal of Combinatorics}, 31(5):1217--1227, 2010.

\bibitem{Bottcher2009}
Julia B{\"o}ttcher, Mathias Schacht, and Anusch Taraz.
\newblock Proof of the bandwidth conjecture of {B}ollob{\'a}s and {K}oml{\'o}s.
\newblock {\em Mathematische Annalen}, 343(1):175--205, 2009.

\bibitem{bmn-lbwen-11}
Boris Bukh, Ji\v{r}\'{\i} Matou\v{s}ek, and Gabriel Nivasch.
\newblock Lower bounds for weak epsilon-nets and stair-convexity.
\newblock {\em Israel Journal of Mathematics}, 182:199--228, 2011.

\bibitem{c-sugbpg-02}
Michael~R. Capalbo.
\newblock Small universal graphs for bounded-degree planar graphs.
\newblock {\em Combinatorica}, 22(3):345--359, 2002.

\bibitem{CardinalHK15}
Jean Cardinal, Michael Hoffmann, and Vincent Kusters.
\newblock On universal point sets for planar graphs.
\newblock {\em J. Graph Algorithms Appl.}, 19(1):529--547, 2015.

\bibitem{cg-gcast-78}
Fan R.~K. Chung and Ronald~L. Graham.
\newblock On graphs which contain all small trees.
\newblock {\em J. Combin. Theory Ser. B}, 24(1):14--23, 1978.

\bibitem{cg-ugst-83}
Fan R.~K. Chung and Ronald~L. Graham.
\newblock On universal graphs for spanning trees.
\newblock {\em J. London Math. Soc.}, 27(2):203--211, 1983.

\bibitem{FraysseixPP90}
Hubert de~Fraysseix, J{\'{a}}nos Pach, and Richard Pollack.
\newblock How to draw a planar graph on a grid.
\newblock {\em Combinatorica}, 10(1):41--51, 1990.

\bibitem{ddlw-ccdpg-05}
Emilio {Di~Giacomo}, Walter Didimo, Giuseppe Liotta, and Stephen~K. Wismath.
\newblock Curve-constrained drawings of planar graphs.
\newblock {\em Comput. Geom.}, 30(1):1--23, 2005.

\bibitem{dej-alpgb-20}
Vida Dujmovi\'c, Louis Esperet, Gwena{\"{e}}l Joret, Cyril Gavoille, Piotr
  Micek, and Pat Morin.
\newblock Adjacency labelling for planar graphs (and beyond).
\newblock {\em CoRR}, abs/2003.04280, 2020.

\bibitem{fpr-lr-20}
Fabrizio Frati, Maurizio Patrignani, and Vincenzo Roselli.
\newblock {LR}-drawings of ordered rooted binary trees and near-linear area
  drawings of outerplanar graphs.
\newblock {\em J. Comput. Syst. Sci.}, 107:28--53, 2020.

\bibitem{FulekT15}
Radoslav Fulek and Csaba~D. T{\'{o}}th.
\newblock Universal point sets for planar three-trees.
\newblock {\em J. Discrete Algorithms}, 30:101--112, 2015.

\bibitem{gmpp-ept-90}
Peter Gritzmann, Bojan Mohar, J{\'{a}}nos Pach, and Richard Pollack.
\newblock Embedding a planar triangulation with vertices at specified points.
\newblock {\em Amer. Math. Monthly}, 98(2):165--166, 1991.

\bibitem{Kurowski04}
Maciej Kurowski.
\newblock A 1.235 lower bound on the number of points needed to draw all
  \emph{n}-vertex planar graphs.
\newblock {\em Inf. Process. Lett.}, 92(2):95--98, 2004.

\bibitem{rad-uguf-64}
Richard Rado.
\newblock Universal graphs and universal functions.
\newblock {\em Acta Arithmetica}, 9(4):331--340, 1964.

\bibitem{Scheucher20}
Manfred Scheucher, Hendrik Schrezenmaier, and Raphael Steiner.
\newblock A note on universal point sets for planar graphs.
\newblock {\em J. Graph Algorithms Appl.}, 24(3):247--267, 2020.

\end{thebibliography}

\end{document}